\pgfplotsset{compat = newest}
\renewenvironment{abstract}
{\small\vspace{-1em}
\begin{center}
\bfseries\abstractname\vspace{-.5em}\vspace{0pt}
\end{center}
\list{}{
\setlength{\leftmargin}{0.6in}\setlength{\rightmargin}{\leftmargin}}\item\relax}
{\endlist}
\declaretheorem[name=Theorem, numberwithin=section]{theorem}
\declaretheorem[name=Lemma, sibling=theorem]{lemma}
\declaretheorem[name=Definition, sibling=theorem]{definition}
\declaretheorem[name=Corollary, sibling=theorem]{corollary}
\declaretheorem[name=Conjecture, sibling=theorem]{conjecture}
\def\cqedsymbol{\ifmmode$\lrcorner$\else{\unskip\nobreak\hfil
\penalty50\hskip1em\null\nobreak\hfil$\lrcorner$
\parfillskip=0pt\finalhyphendemerits=0\endgraf}\fi}
    \def\R{\mathcal{R}}
\let\le\leqslant
\let\ge\geqslant
\let\leq\leqslant
\let\geq\geqslant
\newcounter{regle}
\newcommand{\regle}[1]{\refstepcounter{regle}\label{#1}R_{\theregle}}
\title{Improved square coloring of planar graphs\thanks{This work was supported by ANR project GrR (ANR-18-CE40-0032)}}
\author[2]{Nicolas Bousquet}
\author[2]{Quentin Deschamps}
\author[1,2]{Lucas de Meyer}
\author[2]{Théo Pierron}
\affil[1]{Département Informatique, ENS Rennes}
\affil[2]{Univ. Lyon, Université Lyon 1, LIRIS UMR CNRS 5205, F-69621, Lyon, France}
\date{}
\begin{document}

\maketitle

\begin{abstract}
    Square coloring is a variant of graph coloring where vertices within distance two must receive different colors. When considering planar graphs, the most famous conjecture (Wegner, 1977) states that $\frac32\Delta+1$ colors are sufficient to square color every planar graph of maximum degree $\Delta$. This conjecture has been proven asymptotically for graphs with large maximum degree. We consider here planar graphs with small maximum degree and show that $2\Delta+7$ colors are sufficient, which improves the best known bounds when $6\leqslant \Delta\leqslant 31$. 
\end{abstract}

\section{Introduction}

In graph theory, graph coloring is among one of the most studied problems. 
The history of graph coloring begins in the 19th century with the coloration of maps and the question asking whether four colors are sufficient to color differently regions sharing a same border.
This question can be rephrased as whether every planar graph can be properly colored with four colors, which was proved by Appel and Haken in 1976~\cite{appel1976every}. The proof of this result is known for being the first major computer-assisted proof.

Many other types of colorings were studied in the last decades, in particular in the case of planar graphs. We are interested here in the so-called \emph{square coloring}, where two vertices must receive distinct colors if they are at distance at most two. Given a graph $G$, the \emph{$2$-chromatic number} denoted by $\chi_2(G)$ is the minimum number of colors to color vertices at distance at most $2$ differently. The name ``square coloring'' comes from the fact that $\chi_2(G)$ can also be defined as the chromatic number of the square $G^2$ of $G$, \emph{i.e.} the graph obtained from $G$ by adding edges between vertices at distance two. 

Given a graph $G$, it is not hard to see that $\chi_2(G) \ge \Delta(G)+1$ where $\Delta(G)$ (or $\Delta$ when $G$ is clear from context) is the maximum degree of $G$. Indeed, all the neighbors of a vertex $v$ are at distance at most $2$ in $G$ and then must receive distinct colors. On the other hand, we have $\chi_2(G) \leq \Delta(G)^2+1$ since every vertex is adjacent to at most $\Delta(G)^2$ vertices in $G^2$ and in particular $G^2$ can be colored greedily with $\Delta(G)^2+1$ colors. One can prove that in general graphs this bound is tight for a finite number of graphs called the Moore graphs \cite{hoffman2003moore}, and asymptotically tight for the infinite family of incidence graphs of projective planes (which require $\Delta^2-\Delta+1$ colors).

In this paper, we focus on the particular case of planar graphs, that are graphs that can be embedded on the plane without edge-crossings. Planar graphs are known to be $5$-degenerate and thus we have $\chi_2(G) \le 5\Delta+1$. However, even if linear, this bound seems far from tight, since the graphs from Figure~\ref{fig:borneinf} satisfy $\chi_2(G)=\lfloor\frac{3\Delta(G)}{2}\rfloor+1$, which is the highest known value of $\chi_2$. 
\begin{figure}[!ht]
  \centering
  \begin{tikzpicture}[high/.style={inner sep=1.4pt, outer sep=0pt, circle, draw,fill=white}]
    \def\e{3}
    \def\es{1.2}
    \def\et{0.65}
    \def\R{2.1}
    \def\r{1.5}

    \begin{scope}[xshift=-2cm, yshift = 0cm, scale=0.8]
\draw

      (90:\e) node[high] (v1){}
      (90+120:\e) node[high] (v2){}
      (90+240:\e) node[high] (v3){}

      \foreach \i in {0,1,-1,2,-2}{
        (v2) ++ (60:\e/1.16)  ++ (60+90:0.3*\i) node[high](x){}
        (v2)--(x)-- (v1)
      }

      \foreach \i in {0,1,-1,2,-2}{
        (v1) ++ (-60:\e/1.16)  ++ (-60+90:0.3*\i) node[high](x){}
        (v1)--(x)-- (v3)
      }

      \foreach \i in {0,1,-1,2,-2}{
        (v3) ++ (-180:\e/1.16)  ++ (-180+90:0.3*\i) node[high](x){}
        (v2)--(x)-- (v3)
      }
      ;
      \draw[bend left] (v2)to(v3);
    \end{scope}
  \end{tikzpicture}
  \caption{Wegner's construction}
  \label{fig:borneinf}
\end{figure}
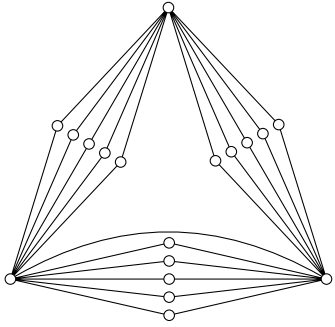

A famous conjecture from 1977 by Wegner~\cite{wegner1977graphs} states that up to small $\Delta$ cases, this should be the right upper bound. 
\begin{conjecture}[Wegner~\cite{wegner1977graphs}]
Every planar graph G with maximum degree $\Delta$ satisfies:
$$
\chi_2(G) \le \left\{
    \begin{array}{ll}
        7 & \mbox{if } \Delta = 3, \\
        \Delta + 5 & \mbox{if } 4 \le \Delta \le 7, \\
        \lfloor \frac{3\Delta}{2} \rfloor  + 1 & \mbox{if } \Delta \ge 8.
    \end{array}
\right.
$$
\end{conjecture}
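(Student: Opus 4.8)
The plan is to attack the statement by the discharging method applied to a hypothetical minimal counterexample, following the strategy that has driven essentially all progress on square colorings of planar graphs. Fixing the relevant regime of $\Delta$, suppose $G$ is a counterexample with the fewest edges (equivalently, a smallest planar graph that is not $k$-square-colorable for the prescribed $k$). Minimality buys reducibility: if $G$ contains any configuration $H$ such that every $k$-square-coloring of a suitable smaller graph (obtained from $G$ by deleting or contracting part of $H$) extends to all of $H$, then $H$ cannot occur in $G$. The first block of work is therefore to compile a list of forbidden configurations — light vertices whose distance-$\le 2$ ball is only lightly constrained, short paths and cycles of small-degree vertices, sparse neighborhoods around large vertices — by counting, for each deleted vertex, how many colors its second neighborhood can forbid and checking that fewer than $k$ are ever blocked, so that a color always remains.

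Next I would set up the discharging itself. By Euler's formula, assigning to each vertex $v$ the charge $\deg(v)-4$ and to each face $f$ the charge $\ell(f)-4$ makes the total charge $-8<0$. I would then design local rules that move charge from large vertices and long faces toward the light vertices and short faces that the reducibility phase has shown must be rare, and argue that after redistribution every vertex and face carries nonnegative charge, contradicting the negative total. The coloring enters only through the reducibility lemmas, which certify that the dangerous sparse configurations — the ones that would act as charge sinks — are absent, so the rules need only handle the surviving local pictures.

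The three regimes demand genuinely different treatments. For $\Delta=3$ and $4\le\Delta\le 7$ the additive targets ($7$ and $\Delta+5$) leave almost no slack, so I expect these cases to reduce to finite, largely case-by-case verifications — possibly computer-assisted, in the spirit of the Appel--Haken four-color proof mentioned above — rather than to a clean discharging argument. For $\Delta\ge 8$ the bound $\lfloor\tfrac{3\Delta}{2}\rfloor+1$ is the substantive claim, and here the lower-bound construction of Figure~\ref{fig:borneinf}, in which three big vertices share their neighborhoods, pins down exactly which configuration must be tight and hence what the discharging must only barely survive.

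The hard part — the reason this statement remains a conjecture and why the present paper settles only for $2\Delta+7$ — is pushing the leading constant from $2$ down to $\tfrac32$. The difficulty is structural rather than local: when several vertices of degree close to $\Delta$ lie mutually within distance two, their second neighborhoods overlap heavily and collectively demand close to $\tfrac32\Delta$ colors, yet a discharging scheme has no evident way to import enough charge into such a dense cluster from its sparse surroundings without an extremely delicate, globally coordinated rule set. Controlling these \emph{clusters} of heavy vertices — not any single configuration — is the genuine obstacle, and it is precisely the regime that Wegner's construction inhabits. I would therefore expect the bulk of the effort, and the real risk of the approach failing, to concentrate in the second-neighborhood bookkeeping around clustered high-degree vertices, where the gap between the provable $2\Delta$-type bounds and the conjectured $\tfrac32\Delta$ has so far proved resistant.
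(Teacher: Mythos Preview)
The statement you were asked to prove is Wegner's \emph{conjecture}. The paper does not prove it; it is stated as an open problem, and the paper's actual contribution is the weaker bound $\chi_2(G)\le 2\Delta+7$ of Theorem~\ref{thm:main}. There is therefore no ``paper's own proof'' to compare against.

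Your proposal is not a proof either, and you seem to know this: you correctly describe the discharging-plus-reducibility template, correctly note that the $\Delta=3$ case is settled (by Thomassen) and that the small-$\Delta$ additive cases are tight and delicate, and you correctly diagnose the obstruction to reaching $\tfrac32\Delta$ as the control of clusters of high-degree vertices whose second neighborhoods overlap. What you have written is an honest research outline, not an argument that closes the gap. The concrete missing idea is exactly the one you name at the end: no known set of reducible configurations and discharging rules forces enough charge into the dense ``three heavy vertices sharing neighborhoods'' picture of Figure~\ref{fig:borneinf} to survive with leading constant $\tfrac32$ rather than something larger. Absent a new structural lemma constraining such clusters, the scheme you sketch will reproduce bounds of the shape $c\Delta+O(1)$ with $c>\tfrac32$, as in~\cite{molloy2005bound} or the present paper, and will not settle the conjecture.
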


Despite receiving considerable attention, Wegner's conjecture is still open today, except for the case of subcubic graphs solved by Thomassen~\cite{thomassen2001applications}. As shown independently by~\cite{havet2017list} and~\cite{amini2013unified}, the conjecture asymptotically holds: $\chi_2(G)=3/2 \Delta +o(\Delta)$ when $\Delta\to\infty$. Many results of the form $c\Delta+O(1)$ (where $c$ is a constant) were also found, culminating with $c=5/3$, obtained by Molloy and Salavatipour~\cite{molloy2005bound}.

However, since the constants hidden in these proofs are large (and these results only hold for large enough $\Delta$), the picture is far from being complete, especially for small values of $\Delta$. An extensive line of work consisted in improving function when $\Delta$ is small. A table of some of the existing results is summarized in Table~\ref{table}.

\begin{center}\label{table}
\begin{tabular}{|c|c|c|}
     \hline 
     Authors & Restriction & Result  \\
     \hline
     Thomassen \cite{thomassen2001applications}& $\Delta \leq 3$ & $\chi_2(G) \leq 7$ \\
     \hline
     Jonas \cite{Jonas1993graph} & $\Delta \geq 7$ &  $\chi_2(G) \leq 8 \Delta -22$ \\
    \hline
     Wong \cite{wong1996colouring} & $\Delta \geq 7$ &   $\chi_2(G) \leq 3 \Delta +5$ \\
     \hline
     Madaras and Marcinova \cite{madaras2002structural} & $\Delta \geq 12$ &   $\chi_2(G) \leq 2 \Delta +18$ \\
     \hline
     & $\Delta \leq 20$ &   $\chi_2(G) \leq 59$ \\
     Borodin et al. \cite{borodin2002stars} & $21 \leq \Delta \leq 46$ &   $\chi_2(G) \leq \Delta + 39$ \\
     & $\Delta \geq 47$ &   $\chi_2(G) \leq \lceil \frac{9 \Delta}{5} \rceil +1$ \\
     \hline
     Van den Heuvel and McGuinness \cite{van2003coloring} & $\Delta \geq 5$ &$\chi_2(G) \leq 9\Delta -19$ \\
     & & $\chi_2(G) \leq 2\Delta +25$ \\
     \hline
    Agnarsson and Halldorsson \cite{agnarsson2003coloring} & $\Delta \geq 749$ &   $\chi_2(G) \leq \lfloor \frac{9 \Delta}{5} \rfloor +2$  \\
     \hline
    Molloy and Salavatipour \cite{molloy2005bound} & $\Delta \geq 249$ &$\chi_2(G) \leq \lceil \frac{5 \Delta}{3} \rceil +25$ \\
    & & $\chi_2(G) \leq \frac{5 \Delta}{3} \rceil +78$ \\
    \hline
    Zhu and Bu \cite{zhu2018minimum} & $\Delta \leq 5$ &$\chi_2(G) \leq 20$ \\
    & $\Delta \geq 6$ & $\chi_2(G) \leq 5 \Delta -7$ \\
    \hline
    Krzyzinski et al. \cite{krzyzinski2021coloring} & $\Delta \ge 6$ & $\chi_2(G) \le 3 \Delta +4$\\
    \hline
\end{tabular}
\end{center} 
When we represent all these results on the same graph depending on $\Delta$, we obtain the graph depicted in Figure~\ref{fig:courbe}. We follow this line of research by improving the best known bound on $\chi_2(G)$ for any planar graph whose maximum degree is between $6$ and $31$. More formally, we prove the following.

\begin{theorem}\label{thm:main}
Let $G$ be a planar graph of maximal degree $\Delta \geq 9$. Then, $\chi_2(G) \leq 2\Delta +7$.
\end{theorem}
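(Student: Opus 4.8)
The plan is to use the standard discharging method, following the template of the cited results that obtain bounds of the form $c\Delta + O(1)$. I would argue by contradiction: let $G$ be a minimal counterexample (say, minimal number of vertices, or minimal number of edges) to the statement that every planar graph of maximum degree $\Delta \ge 9$ has $\chi_2(G) \le 2\Delta + 7$. By minimality $G$ is connected, and every proper subgraph $H$ of $G$ satisfies $\chi_2(H) \le 2\Delta(H) + 7 \le 2\Delta + 7$ (one must be slightly careful that deleting vertices does not increase $\Delta$, which it cannot). The goal is then twofold: (i) establish a list of \emph{reducible configurations} — local structures that cannot appear in $G$ because, if they did, one could delete a small piece, color the rest by minimality, and extend the coloring greedily since each uncolored vertex sees at most $2\Delta + 6$ constrained colors; and (ii) show by a discharging argument on the planar embedding that every planar graph with $\Delta \ge 9$ must contain at least one of these configurations, yielding a contradiction.

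For step (i), the reducible configurations will typically involve vertices of small degree with low-degree neighbors. The key counting fact is that a vertex $v$ of degree $d$ has at most $d\Delta$ vertices at distance $\le 2$ (counting $v$'s neighbors and their neighbors), but this is refined: if $v$ has degree $d$ and its neighbors have degrees $d_1, \dots, d_d$, then $v$ has at most $\sum_i (d_i - 1)$ other vertices at distance exactly two, so $|N^2[v]| \le 1 + \sum_i d_i$. Hence if $\sum_i d_i \le 2\Delta + 6$, vertex $v$ is \emph{removable}: delete it, color $G - v$ by minimality, and give $v$ a free color. So for instance any vertex of degree $\le 2$ is removable (since $1 + 2\Delta \le 2\Delta + 6$), any vertex of degree $3$ with a neighbor of degree $\le \Delta - 2$ is removable, and more generally one derives a family of forbidden configurations: low-degree vertices cannot be adjacent to too many high-degree vertices. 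More delicate reducible configurations (handling the "heavy" vertices, i.e. those of degree close to $\Delta$, and clusters of small vertices) are obtained by deleting several vertices at once and recoloring carefully, possibly using the fact that $G^2$ restricted to a small set behaves well; one may also need configurations asserting that two heavy vertices sharing many common neighbors is impossible. Discharging bounds of $2\Delta + c$ are known to be reachable, so the set of configurations should not need to be exotic, but assembling enough of them is where most of the work lies.

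For step (ii), I would assign to each vertex $v$ the charge $\deg(v) - 6$ and to each face $f$ the charge $2|f| - 6$ (or $\deg(v)-4$ and $|f|-4$, adjusting constants), so that by Euler's formula the total charge is $-12$ (resp. $-8$), hence negative. One then defines discharging rules — typically, each big vertex or big face sends a fixed fractional amount of charge to its nearby small vertices and small faces — designed so that, \emph{assuming none of the reducible configurations from step (i) occurs}, every vertex and every face ends with nonnegative charge. This contradicts the negativity of the total charge, so $G$ cannot exist. The rules have to be tuned to the structural restrictions coming from reducibility: e.g. a vertex of degree $3$ has all neighbors of degree $\ge \Delta - 1 \ge 8 > 6$, so it can collect enough charge from them; triangles incident to small vertices need to be controlled; and vertices of degree $4$ and $5$ (the genuinely awkward cases, since $5$-vertices have charge $-1$ and their neighbors might not all be huge) need a careful accounting of how many low-degree neighbors and how many $3$-faces they can have.

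The main obstacle I expect is the interplay between $4$- and $5$-vertices and their low-degree neighbors inside triangulated regions: these are the vertices with negative initial charge that are \emph{not} automatically reducible, so the reducible configuration list must be rich enough to pin down exactly which neighborhoods around a $5$-vertex can occur, and then the discharging rules must squeeze nonnegativity out of those constrained neighborhoods — this is precisely where bounds of the form $2\Delta + c$ tend to be won or lost, and why the result is stated only for $\Delta \ge 9$ (for smaller $\Delta$ the arithmetic $1 + \sum d_i \le 2\Delta + 6$ no longer gives enough slack, and the heavy-vertex case analysis breaks down). A secondary difficulty is verifying that multi-vertex reducible configurations can genuinely be recolored: one must order the deleted vertices so that each, when reinserted, sees strictly fewer than $2\Delta + 7$ colors, which sometimes requires a short Kempe-chain or list-coloring argument rather than pure greedy extension.
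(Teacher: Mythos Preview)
Your overall plan matches the paper's approach: a minimal counterexample, a short list of reducible configurations driven by the inequality $\sum_{u\in N(v)}\deg(u)\le 2\Delta+6$, initial charges $\deg(v)-6$ and $2\deg(f)-6$, and a case analysis focused on triangulated $4$- and $5$-vertices. You have also correctly located the hard case.

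There is, however, one idea you are missing that the paper's argument hinges on. You propose minimality in ``number of vertices, or number of edges''; the paper instead takes the lexicographic order on $(|V|,-|E|)$, i.e.\ among counterexamples with fewest vertices it takes one with \emph{most} edges. This seemingly innocuous choice buys a structural lemma for free: any $4^+$-face contains at most two vertices of degree $<\Delta$, and these must be consecutive (otherwise one could add a chord across the face, obtaining a smaller counterexample in the chosen order). This is what makes the face rule work: a $d$-face can afford to send $d-3$ to each incident $5^-$-vertex only because there are at most two of them. Without this trick, your face-to-small-vertex rule has no reason to terminate nonnegatively, and the whole balance collapses. This also feeds the $4$- and $5$-vertex analysis, since a $4$- or $5$-vertex incident to a $4^+$-face then automatically has a $\Delta$-neighbor on that face.

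A second, smaller point: the paper's vertex-to-vertex rules are not simply ``big vertex sends fixed amount to small neighbor''. They route extra charge \emph{through} strong neighbors (a $d^+$-vertex sends $\tfrac{\omega_0}{d}$ to a weak neighbor $w$, plus an additional $\tfrac{\omega_0}{2d}$ for each strong vertex next to $w$ in the cyclic order). This asymmetry is what rescues a triangulated $5$-vertex with one weak neighbor, and it is not something a naive uniform rule would achieve. Your plan anticipates that the rules ``have to be tuned'', but the specific device of conditioning the amount on the strength of the \emph{adjacent} neighbors around $v$ is worth knowing in advance.
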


For $\Delta\leqslant 6$, the $2\Delta+7$ bound is 19. Reusing our analysis, we can easily prove a slightly worse bound, which still improves the best current bound of~\cite{krzyzinski2021coloring}.  
\begin{theorem}
\label{thm:delta6}
Let $G$ be a planar graph of maximal degree $\Delta \leq 6$. Then, $\chi_2(G) \leq 21$.
\end{theorem}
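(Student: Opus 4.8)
The plan is to reuse the discharging framework developed for Theorem~\ref{thm:main}, adapting it to the regime $\Delta \le 6$. Recall that the proof of Theorem~\ref{thm:main} proceeds by contradiction: one takes a minimal counterexample $G$ (minimal with respect to, say, the number of vertices) to the statement $\chi_2(G) \le 2\Delta + 7$, establishes a list of \emph{reducible configurations} that cannot occur in $G$ (because a smaller graph obtained by deleting or contracting part of such a configuration could be colored and the coloring extended back, as the number of ``forbidden'' colors around the affected vertices is strictly less than $2\Delta + 7$), and then derives a contradiction via a discharging argument on the planar embedding using Euler's formula $\sum_v (d(v) - 6) + \sum_f (2d(f) - 6) = -12$. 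For $\Delta \le 6$, I would rerun exactly the same argument with the target value $21$ in place of $2\Delta + 7$. The key observation is that all the reducibility computations in the main proof are \emph{monotone} in the number of available colors: if a configuration is reducible when one has $2\Delta + 7$ colors available, then a fortiori it is reducible when one has $21 \ge 2\Delta + 9$ colors available (since $\Delta \le 6$), because every inequality of the form ``number of colors seen $< $ number of colors available'' only becomes easier. Hence the same set of reducible configurations remains forbidden, and since the discharging rules and the structural contradiction depend only on the configuration list (not on the precise color count), the discharging phase goes through verbatim.

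First I would state precisely which value of $\Delta$ is the worst case: for $\Delta \le 5$ one has $2\Delta + 7 \le 17 \le 21$, and for $\Delta = 6$ one has $2\Delta + 7 = 19 \le 21$, so in every case $21$ colors is at least as generous as what Theorem~\ref{thm:main}'s argument requires; in fact there is a slack of at least two colors. Then I would check, configuration by configuration in the list used for Theorem~\ref{thm:main}, that the slack is not needed to be tight — i.e., that no reducibility argument used the precise equality $2\Delta + 7$ and fails for a strict inequality; since reducibility arguments are one-directional (``few enough colors are blocked''), extra colors never hurt, so this check is immediate. The reason the bound in Theorem~\ref{thm:delta6} is $21$ rather than the naive $2\Delta + 7 = 19$ is presumably that a small number of the reducible configurations, when $\Delta$ is as small as $6$, require a little more room: specifically, a configuration whose reducibility when $\Delta$ is large follows from an inequality like $c_1\Delta + c_2 < 2\Delta + 7$ with $c_1 > 2$ may force, at $\Delta = 6$, a bound slightly above $19$. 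One then takes $21$ as a safe uniform value covering all of $\Delta \in \{3,4,5,6\}$ (the case $\Delta \le 3$ being already covered by Thomassen with only $7$ colors, but included here for uniformity; and note $\chi_2$ is monotone under adding a universal-degree vertex so one may freely assume $\Delta = 6$ if convenient, or simply treat $\Delta \le 6$ directly).

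Concretely, the steps are: (1) assume for contradiction that $G$ is a vertex-minimal planar graph with $\Delta \le 6$ and $\chi_2(G) \ge 22$; (2) observe that each reducible configuration $C$ from the proof of Theorem~\ref{thm:main} is still reducible here — reprove or cite each reducibility lemma with the constant $21$, noting that every step only compares a count of blocked colors against the available palette and that $21$ exceeds every threshold used (re-examining the handful of configurations with the largest multiplicative dependence on $\Delta$ to confirm $21$, and not merely $19$, suffices); (3) conclude that $G$ avoids all those configurations, so $G$ has the same forbidden-subgraph structure as in the main theorem; (4) apply the identical discharging procedure — same initial charge $d(v) - 6$ on vertices and $2d(f) - 6$ on faces, same discharging rules $R_1, R_2, \dots$ — to conclude that the total charge is nonnegative, contradicting that it equals $-12 < 0$ by Euler's formula. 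The main obstacle — and it is a minor, purely bookkeeping one — is step (2): one must verify that none of the reducibility arguments from the main proof secretly exploited $\Delta$ being large (for instance, an argument that a vertex has ``many'' neighbors of some type might implicitly need $\Delta \ge 9$ to even make sense). If some configuration's reducibility proof genuinely breaks for small $\Delta$, one either finds a replacement configuration (likely easier to reduce, since small $\Delta$ means fewer colors to avoid) or absorbs the loss into the extra slack, which is exactly why the stated bound is the slightly weaker $21$ rather than $19$. No new discharging is needed; the novelty is entirely in re-auditing the reducibility lemmas at a fixed small palette size.
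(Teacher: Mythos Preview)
Your high-level plan (reuse the discharging framework, observe that reducibility is monotone in the palette size) is sound, and indeed this is the paper's approach. But you locate the difficulty in the wrong step, and your claim that ``the discharging phase goes through verbatim'' is where the argument actually breaks.

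In the main proof, the final-charge analysis for $3$-, $4$-, and $5$-vertices (Sections~6.1--6.2) depends essentially on receiving charge from $7^+$-neighbours via rules $R_{\ref{r11}}$, $R_{\ref{r11+}}$, $R_{\ref{r7}}$. When $\Delta\le 6$ there are no $7^+$-vertices at all, so these rules are vacuous and only $R_{\ref{rface}}$ moves any charge. Concretely, take a $4$-vertex $v$ incident to three triangles and one $4$-face: $v$ receives $1$ from the square by $R_{\ref{rface}}$ and nothing else, ending with charge $-1$. This configuration is \emph{not} excluded by any of the forbidden configurations of Section~4 (Lemma~\ref{lem:deg4forbif3} only rules out a triangulated $4$-vertex with a triangulated $5$-neighbour and a small-degree neighbour), so your step~(3) does not provide enough structure for step~(4). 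Similarly, a triangulated $5$-vertex would end with charge $-1$.

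What the paper actually does is prove \emph{new, stronger} reducible configurations that are specific to $\Delta\le 6$ with $21$ colours: no $3$-vertex on a triangle (Lemma~\ref{lem:delta6b}), and no triangulated $5$-vertex or $4$-vertex on three triangles (Corollary~\ref{lem:delta6}). These are not on the Section~4 list; they are exactly what is needed so that every $d$-vertex with $d\in\{3,4,5\}$ is incident to at least $6-d$ faces of length $\ge 4$, after which $R_{\ref{rface}}$ alone gives enough charge. The value $21$ is not an artifact of re-auditing old lemmas as you suggest; it is the threshold that makes these new configurations reducible (a triangulated $5$-vertex sees at most $5+5\cdot(6-3)=20$ vertices within distance two when $\Delta=6$, so one needs $21$ colours). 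So the obstacle is not your step~(2) but your step~(4): you need new reducibility lemmas to replace the role of the high-degree discharging rules, not merely a re-audit of the existing ones.
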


\begin{figure}[hbtp]
    \begin{center}

    \pgfplotstableread{conjecture.dat}{\table}

        \begin{tikzpicture}[scale=0.7]
        \begin{axis}[
            xmin = 2, xmax = 26,
            ymin = 5, ymax = 70,
            xtick distance = 1,
            ytick distance = 4,
            grid = both,
            minor tick num = 0,
            major grid style = {lightgray!50},
            minor grid style = {lightgray!25},
            width = 1.43\textwidth,
            height = 0.95\textwidth,
            xlabel = {Maximum degree $\Delta$},
            ylabel = {Bound on $\chi_2(G)$},
            legend cell align = {left},
            legend pos = north west,
            grid style = dashed,
        ]
        
        \addplot[blue , mark = *] {0};
        \addplot[red , mark = *] {0};
        \addplot[purple , mark = *] {0};
        \addplot[yellow , mark = *] {0};
        \addplot[green , mark = *] {0};
        \addplot[orange , mark = *] {0};
        \addplot[teal, mark = *, mark size = 2.7pt] {0};
        
        \addlegendentry{Conjecture};
        \addlegendentry{Thomassen};
        \addlegendentry{Zhu and Bu};
        \addlegendentry{Rzążewski et al.};
        \addlegendentry{Madaras and Marcinová};
        \addlegendentry{Borodin et al. };
        \addlegendentry{Theorems~\ref{thm:main} and~\ref{thm:delta6}}
        
        \addplot[blue, mark = *] table [x = {x}, y = {y1}] {\table};
        
         \addplot[red!40, mark = *, dashed, domain = 2:3, samples = 2] {7} ;
        \addplot[red, only marks = *]coordinates {
             (3,7) };
            
        \addplot[purple!40, mark = *, dashed, domain = 2:5, samples = 4] {20} ;
        \addplot[purple!40, mark = *, dashed,] coordinates {(5,20) (6,23) } ;
        \addplot[purple!40, mark = *, dashed, domain = 6:26, samples = 21] {5*x - 7} ;
        \addplot[purple, mark = *, domain = 4:5, samples = 2] {20} ;
        
        \addplot[yellow!40, mark = *, dashed, domain = 2:6, samples = 5] {22} ;
        \addplot[yellow!40, mark = *, dashed, domain = 6:26, samples = 21] {3*x + 4} ;
        \addplot[yellow, mark = *, domain = 6:14, samples = 9] {3*x + 4} ;

        \addplot[green!40, dashed, mark = *, domain= 2:12, samples = 11] {42};
        \addplot[green!40, dashed, mark = *, domain= 12:26, samples = 15] {2*x + 18};
        \addplot[green, mark = *, domain= 14:21, samples = 8] {2*x + 18};
        
        \addplot[orange!40, dashed, mark = *, domain= 2:20, samples = 19] {59};
        \addplot[orange!40, dashed, mark = *, domain= 20:24, samples = 5] {x + 39};
        \addplot[orange, mark = *, domain= 21:26, samples = 6] {x + 39};

        \addplot[teal!40, dashed, mark = *, mark size = 2.7pt, domain = 2:6, samples = 5] {21};
        \addplot[teal, mark = *, mark size = 2.7pt, domain = 9:26, samples = 18] {2*x + 7};
        \addplot[teal, mark = *, mark size = 2.7pt, domain = 7:9, samples = 3] {25};
        \addplot[teal, mark = *, mark size = 2.7pt, domain = 6:7, samples = 2] {4*x-3};
        \end{axis}
        \end{tikzpicture}

    \end{center}
    \caption{Bound on $\chi_2(G)$ depending to $\Delta$ for $3\le \Delta \le 26$}
    \label{fig:courbe}
\end{figure}
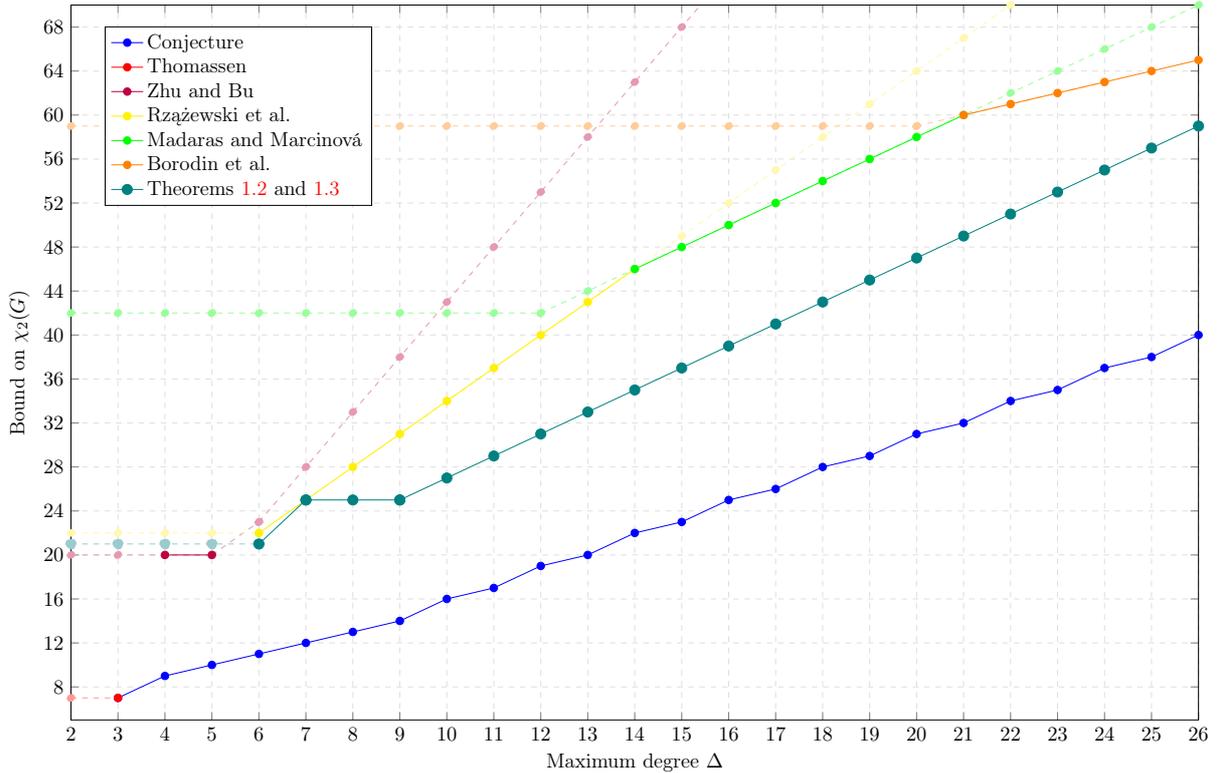
     
\section{Preliminaries}

Given a graph $G = (V, E)$, we denote by respectively $V(G)$ and $E(G)$ the \emph{vertex set} and the \emph{edge set} of $G$. Since our goal is to color $G$, we assume that it does not contain loops nor parallel edges. All along the paper, we will assume that $G$ is \emph{planar}, namely vertices and edges can be drawn in the plane so that no two edges cross (except maybe on their common extremities). When such an embedding is fixed, we denote by $F(G)$ the set of the faces of $G$.

Let $v \in V(G)$. We denote by $N_G(v)$ the \emph{neighborhood} of $v$ in $G$, i.e., the set of the vertices adjacent to $v$. We define  the \emph{degree}  $\deg_G(v)$ of $v$ in $G$ as the size of $N_G(v)$. The maximum and minimum degrees of $G$ are respectively denoted by $\Delta(G)$ and $\delta(G)$. Finally, the \emph{degree of a face} $f$, denoted by $\deg_G(f)$, is the number of edges incident to $f$, where each incident cut-edge is counted twice.

For an integer $d$, a vertex is said to be a \emph{$d$-vertex} (respectively, a \emph{$d^-$-vertex}, a \emph{$d^+$-vertex}) if its degree is equal to $d$ (resp., at most $d$, at least $d$). Likewise, a face is said to be a \emph{$d$-face} (resp., a \emph{$d^-$-face}, a \emph{$d^+$-face}) if its degree is equal to $d$ (resp., at most $d$, at least $d$).

Let $X \subseteq V(G)$. We denote by $G[X]$ the subgraph induced by the set of vertices $X$.
The graph obtained from $G$ by removing a vertex $v$ and its incident edges is denoted by $G-v$.
Note that $G - v = G [V \setminus v]$.
Let $u, v$ in $V$.
By $G + uv$, we denote the graph $G'$ such that $V(G') = V(G)$ and $E(G') = E(G) \cup \{uv\}$, i.e., the graph obtained from $G$ by adding an edge between $u$ and $v$ if it does not already exist.

A \emph{partial coloring} on a set of vertices $X$ of a graph $G$ is a coloring for the subgraph induced by $X$ in $G$ such that every pair of vertices at distance $2$ in $G$ are colored differently. 

\section{Graph ordering}

To prove Theorem~\ref{thm:main}, we assume that there exists a minimal counterexample and exhibit a contradiction. We use a specific order on graphs which is the following.

\begin{definition}
Let $G_1=(V_1,E_1)$ and $G_2=(V_2,E_2)$ be two graphs. We say $G_1 \leq G_2$ when $|V_1|<|V_2|$ or $|V_1|=|V_2|$ and $|E_1| \geq |E_2|$. In other words $\leq$ is the lexicographic order on $(|V|,-|E|)$.
\end{definition}

In the following, we take a counterexample $G$ to Theorem~\ref{thm:main} minimal for $\leq$ and we fix an arbitrary planar embedding of $G$.

\begin{lemma}\label{lem:cutedgeforbif}
The graph $G$ does not contain an edge-separator i.e. an edge $uv$ such that $G\setminus \{u,v\}$ is disconnected.
\end{lemma}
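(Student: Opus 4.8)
The plan is to argue by contradiction using the minimality of $G$ with respect to the order $\leq$. Suppose $G$ contains an edge-separator $uv$, so that $G \setminus \{u,v\}$ has at least two connected components. The idea is to split $G$ into two strictly smaller pieces that overlap only on the edge $uv$, color each piece by minimality, and then recombine the colorings into a valid square coloring of $G$ with $2\Delta+7$ colors.

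\textbf{Setting up the split.} Let $C_1$ be the vertex set of one component of $G \setminus \{u,v\}$ together with $\{u,v\}$, and let $C_2$ be the union of the vertex sets of all remaining components together with $\{u,v\}$. Set $G_1 = G[C_1]$ and $G_2 = G[C_2]$. Since $uv \in E(G)$, both $G_1$ and $G_2$ contain the edge $uv$; both are planar as subgraphs of $G$; and both have maximum degree at most $\Delta$. Crucially, because $uv$ is an edge-separator, there is no edge of $G$ between $C_1 \setminus \{u,v\}$ and $C_2 \setminus \{u,v\}$, so $V(G) = C_1 \cup C_2$ and $E(G) = E(G_1) \cup E(G_2)$ with $E(G_1) \cap E(G_2) = \{uv\}$. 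Each $G_i$ has strictly fewer vertices than $G$ (it misses at least one vertex of the other part), so $G_i < G$ and hence admits a square coloring with $2\Delta+7$ colors.

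\textbf{Recombining the colorings.} The key point is that two vertices of $G$ at distance at most $2$ either both lie in $C_1$ or both lie in $C_2$: any path of length $\leq 2$ joining a vertex of $C_1 \setminus \{u,v\}$ to a vertex of $C_2 \setminus \{u,v\}$ must pass through $\{u,v\}$, and since there is no edge between the two sides, such a path of length $2$ would have its midpoint in $\{u,v\}$ — fine, that keeps both endpoints in the same $C_i$ only if\dots here one must be slightly careful: a length-$2$ path $x$–$u$–$y$ with $x \in C_1\setminus\{u,v\}$ and $y \in C_2\setminus\{u,v\}$ does create a distance-$2$ pair across the split. To handle this, before invoking minimality I would first add the edge $uv$ if not present (it is present here) and, more importantly, identify the coloring constraints at $u$ and $v$: in each $G_i$ the colors of $u$ and $v$ together with the colors of all their neighbors in $G_i$ are pairwise distinct. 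After coloring $G_1$, permute the colors of the coloring of $G_2$ so that $u$ and $v$ receive the same colors as in $G_1$; this is possible since both are proper square colorings. Then any distance-$2$ pair of $G$ with both endpoints in $C_i$ is correctly colored, and a cross pair $x$–$w$–$y$ with $w \in \{u,v\}$, $x \in C_1$, $y \in C_2$ needs $x$ and $y$ to differ: but $x$ is a neighbor of $w$ in $G_1$ and $y$ is a neighbor of $w$ in $G_2$, so after the recoloring the set of colors used on $N_{G_1}(w) \cup \{u,v\}$ and on $N_{G_2}(w) \cup \{u,v\}$ may still collide. The fix is a counting argument: $w$ has at most $\Delta$ neighbors total in $G$, so the colors forbidden around $w$ number at most $\Delta$ on each side, and since we have $2\Delta+7$ colors we have enough freedom to recolor the few conflicting vertices of $C_2\setminus\{u,v\}$ adjacent to $u$ or $v$ greedily — each such vertex sees at most $\Delta^2$ others at distance $\leq 2$ in $G$, which is not obviously below $2\Delta+7$, so instead the cleaner route is to ensure from the start that in $G_i$ we color $N_{G_i}(u) \cup N_{G_i}(v) \cup \{u,v\}$ injectively \emph{and} avoid the small palette of colors appearing on the other side's neighbors of $u,v$; formally this is done by adding to each $G_i$ a bounded-size gadget forcing these constraints, or by observing $|N_G(u)\cup N_G(v)| \le 2\Delta$ so the two neighbor-sets can be made to use disjoint color ranges within the $2\Delta+7$ available colors.

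\textbf{Main obstacle.} The technical heart is exactly the recombination across $\{u,v\}$: a naive union of the two colorings can fail on distance-$2$ pairs whose connecting vertex is $u$ or $v$. I expect the clean way to close this is to first prove (or have available) a sharper reduction statement — essentially that in a minimal counterexample one may assume $uv$ lies on two triangles or bound $\deg(u)+\deg(v)$ — and then argue that the at most $2\Delta$ vertices of $N_G(u) \cup N_G(v)$ can be colored with distinct colors chosen from disjoint blocks of the $2\Delta+7$-element palette on the two sides, after which the remaining vertices of $G_1$ and $G_2$ are colored independently by minimality and glued with no conflict. Once the gluing lemma at an edge is in place, this lemma follows immediately, so I would state and prove that gluing lemma first and then deduce Lemma~\ref{lem:cutedgeforbif} as a one-line corollary.
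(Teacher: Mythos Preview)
Your approach matches the paper's: split $G$ along $\{u,v\}$ into $G_1$ and $G_2$, color each by minimality, permute so that $u$ and $v$ receive the same colors in both, then repair the cross-conflicts through $u$ and $v$. You also correctly identify the only obstacle, namely distance-$2$ pairs of the form $x$--$w$--$y$ with $w\in\{u,v\}$, $x\in N_{G_1}(w)$, $y\in N_{G_2}(w)$. And you even write down the decisive inequality $|N_G(u)\cup N_G(v)|\le 2\Delta$.

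What is missing is that you never commit to and execute the one-line fix that this inequality affords. After fixing the colors of $u$ and $v$ (say $1$ and $2$) in both colorings, set $N_i=(N_{G_i}(u)\cup N_{G_i}(v))\setminus\{u,v\}$. Then $|N_1|+|N_2|\le 2\Delta$, while the colors available besides $1,2$ number $2\Delta+5$. Hence one can permute the colors $\{3,\dots,2\Delta+7\}$ in $\alpha_2$ so that $\alpha_2(N_2)$ is disjoint from $\alpha_1(N_1)$. The merged coloring is then a valid square coloring of $G$: every cross distance-$2$ pair has one endpoint in $N_1$ and the other in $N_2$, and these now use disjoint colors. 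This is exactly the paper's argument. The detours you propose---greedy recoloring of $N_2$ vertex by vertex, adding gadgets, bounding $\deg(u)+\deg(v)$, or deferring everything to a separate ``gluing lemma''---are unnecessary; the global color permutation handles all conflicts at once.
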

\begin{proof}
     Assume by contradiction that $G$ contains an edge-separator $uv$. Let $C$ be a connected component of $G \setminus \{u,v\}$. Let $G_1 = G[C \cup \{u,v\}]$ and $G_2 = G \setminus V(C)$.
     The graphs $G_1$ and $G_2$ have strictly fewer vertices than $G$. So, by minimality of $G$, there exist $\alpha_1$ and $\alpha_2$ two distance-2 $(2\Delta+7)$-colorings of respectively $G_1$ and $G_2$. Up to color permutation, we can assume that $u$ and $v$ are colored respectively $1$ and $2$ on both colorings $\alpha_1$ and $\alpha_2$. For $1 \le i \le 2$, let $N_i=(N_{G_i}(u) \cup N_{G_i}(v)) \setminus \{u, v\}$. As $G$ has maximum degree $\Delta$, $|N_1| + |N_2| \leq 2 \Delta$. So we can permute the colors in $G_1$ and $G_2$, except the color $1$ and $2$, in such a way the vertices of $N_1$ are colored differently from the vertices of $N_2$.
     Now the coloring of $G$ obtained by merging the colorings $\alpha_1$ and $\alpha_2$ is a distance-$2$ coloring for $G$.
\end{proof}
An easy consequence of our choice of ordering yields the following. 

\begin{theorem}\label{thm:edge}
Every $4^+$-face of $G$ contains at most $2$ vertices of degree less than $\Delta$. Moreover, if a face $f$ contains exactly two such vertices, they are consecutive in the face $f$.
\end{theorem}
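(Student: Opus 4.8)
The plan is to exploit the minimality of $G$ in the order $\le$. Suppose, for contradiction, that some $4^+$-face $f$ contains two \emph{non-consecutive} vertices $u$ and $v$ with $\deg_G(u),\deg_G(v)<\Delta$. Deriving a contradiction from this suffices: if the conclusion of the theorem fails then such a face exists, because on a cycle of length at least $4$ any three vertices contain a non-consecutive pair (so a face with three or more small-degree vertices also has two that are non-consecutive). Before starting, I would record that $G$ is $2$-connected: $G$ is connected, and from a cut vertex $w$ together with a suitably chosen neighbour of $w$ one builds an edge-separator, contradicting Lemma~\ref{lem:cutedgeforbif} (note $|V(G)|\ge 10$ since $\Delta\ge 9$). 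Hence $f$ is bounded by a cycle $C=v_1v_2\cdots v_d$ with $d\ge 4$, and after relabelling I may assume $u=v_1$ and $v=v_k$ with $3\le k\le d-1$.

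The main step is to add the edge $uv$. Assume first that $uv\notin E(G)$, and set $G'=G+uv$, drawing the new edge inside the face $f$, so that $G'$ is planar. Since $\deg_G(u),\deg_G(v)<\Delta$, the graph $G'$ still has maximum degree $\Delta\ge 9$; moreover $|V(G')|=|V(G)|$ and $|E(G')|=|E(G)|+1$, so $G'<G$, and by minimality $\chi_2(G')\le 2\Delta+7$. As $G$ is a spanning subgraph of $G'$ we have $d_{G'}(x,y)\le d_G(x,y)$ for all $x,y$, so any two vertices at distance at most $2$ in $G$ are at distance at most $2$ in $G'$; hence a distance-$2$ $(2\Delta+7)$-coloring of $G'$ is also one of $G$, contradicting the choice of $G$.

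It remains to exclude $uv\in E(G)$. In this case $uv$ is drawn outside the cycle $C$, so $C\cup\{uv\}$ splits the plane into the face $f$ and two further regions: $R_1$, bounded by $uv$ and the sub-path $v_1\cdots v_k$ of $C$, and $R_2$, bounded by $uv$ and $v_k\cdots v_dv_1$. Every edge of $G$ other than $uv$ and the edges of $C$ avoids the open face $f$, the curve $C$, and the arc $uv$, hence lies entirely in $\overline{R_1}$ or in $\overline{R_2}$; and $\overline{R_1}\cap\overline{R_2}\cap V(G)=\{u,v\}$. Consequently $G\setminus\{u,v\}$ is the disjoint union of its part lying in $\overline{R_1}$ (which contains $v_2$, since $k\ge 3$) and its part lying in $\overline{R_2}$ (which contains $v_{k+1}$, since $k\le d-1$), with no edge joining the two parts. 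Thus $uv$ is an edge-separator, contradicting Lemma~\ref{lem:cutedgeforbif}.

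Since both cases are impossible, no such face $f$ exists, which is exactly the statement. I expect the only genuinely delicate point to be the planar bookkeeping in the last paragraph — verifying that every vertex and edge of $G$ is confined to $\overline{R_1}$ or $\overline{R_2}$ — together with the preliminary reduction to a $2$-connected $G$ so that faces are bounded by cycles; everything else is a routine minimality argument of the type already used for Lemma~\ref{lem:cutedgeforbif}.
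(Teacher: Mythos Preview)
Your proposal is correct and follows essentially the same approach as the paper: reduce to a $4^+$-face with two non-consecutive small-degree vertices $u,v$, rule out $uv\in E(G)$ via Lemma~\ref{lem:cutedgeforbif} (edge-separator), and otherwise add the edge $uv$ inside $f$ to obtain a smaller planar counterexample. The paper's version is terser (it does not spell out $2$-connectivity or the Jordan-curve bookkeeping), but the logical skeleton is identical.
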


\begin{proof}
Assume by contradiction that $G$ contains a $4^+$-face $f$ with $2$ non-adjacent vertices $u,v$ of degree less than $\Delta$. We claim that $u$ and $v$ cannot be connected via an edge not in $f$ since otherwise this edge would be an edge-separator, contradicting Lemma~\ref{lem:cutedgeforbif}. Therefore, adding the edge $uv$ to $G$ yields a (planar) graph $G'$ with $G' < G$ and $\Delta(G')=\Delta(G)$. Since $G$ is a subgraph of $G'$, $\chi_2(G) \leq \chi_2(G') \leq 2\Delta + 7$ by minimality, a contradiction.

Now if $f$ contains three vertices of degree less than $\Delta$, two of them are non adjacent since $f$ is a $4^+$-face, a contradiction.
\end{proof}

\section{Forbidden configurations}

We will use the same method to prove that our minimal counterexample $G$ does not contain some configurations. Namely, we first assume by contradiction that the configuration is in $G$.
Then, we remove a vertex $v$ and possibly add edges to obtain a smaller planar graph $G'$ such that $\Delta(G')\leqslant \Delta(G)$, and every pair of vertices at distance at most $2$ in $G$ are still at distance $2$ in $G'$.

Since $G'$ is smaller than $G$, $G'$ admits a distance-$2$ $(2\Delta + 7)$-coloring $\alpha$,  and $\alpha$ is a partial coloring of $G$ since the adjacencies at distance at most $2$ are preserved.

In order to extend $\alpha$ to $V(G)$, it is sufficient to color $v$. If there are less than $2\Delta + 7$ forbidden colors for $v$, then we can color $v$ with a free color. This provides a distance-2 coloring of $G$, a contradiction. This proves the following.

\begin{lemma}\label{lem:minconstraint}
The graph $G$ does not contain a vertex $v$ such that, in $G-v$, all the pairs of neighbors of $v$ are at distance at most $2$ and such that $v$ has less than $2 \Delta +7$ neighbors at distance $2$.
\end{lemma}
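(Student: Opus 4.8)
The plan is to argue by contradiction using the minimality of $G$, following exactly the general deletion scheme described just above the statement. Suppose $G$ contains a vertex $v$ satisfying the two stated properties, and set $G' = G - v$. Then $G'$ is planar, $\Delta(G') \le \Delta(G) = \Delta$, and $|V(G')| = |V(G)| - 1$, so $G' < G$ in our order. By minimality of the counterexample, $G'$ admits a distance-$2$ $(2\Delta+7)$-coloring $\alpha$.

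The first point to verify is that $\alpha$, restricted to $V(G) \setminus \{v\}$, is a valid partial coloring of $G$, i.e.\ that no two vertices at distance at most $2$ in $G$ receive the same color. Take $x, y \in V(G) \setminus \{v\}$ with $1 \le d_G(x,y) \le 2$. If $d_G(x,y) = 1$, the edge $xy$ still lies in $G'$; if $d_G(x,y) = 2$ via some common neighbor $z \ne v$, the path $xzy$ survives in $G'$; in both cases $d_{G'}(x,y) \le 2$, so $\alpha(x) \ne \alpha(y)$. The only remaining possibility is that $d_G(x,y) = 2$ is realized \emph{only} through $v$, which forces $x, y \in N_G(v)$. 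Here the hypothesis that all pairs of neighbors of $v$ are at distance at most $2$ in $G - v$ gives $d_{G'}(x,y) \le 2$, hence again $\alpha(x) \ne \alpha(y)$. So $\alpha$ respects every distance-$2$ constraint of $G$ not involving $v$.

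It then remains to color $v$ itself. The colors forbidden for $v$ are precisely those appearing on the vertices at distance at most $2$ from $v$ in $G$; by hypothesis there are strictly fewer than $2\Delta+7$ such vertices, hence strictly fewer than $2\Delta+7$ forbidden colors. Thus some color in $\{1, \dots, 2\Delta+7\}$ is free for $v$, and assigning it to $v$ extends $\alpha$ to a distance-$2$ $(2\Delta+7)$-coloring of $G$, contradicting that $G$ is a counterexample.

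The argument is essentially bookkeeping, and the only place where the hypotheses are genuinely needed — the single point where a naive ``just delete $v$'' attempt could fail — is ensuring that removing $v$ does not push a previously close pair of its neighbors to distance greater than $2$; this is exactly condition (1). Condition (2) is then just the counting that guarantees a free color remains. I do not expect any real obstacle beyond being careful about which pairs of vertices can have their distance increased by the deletion.
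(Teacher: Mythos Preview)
Your proof is correct and follows exactly the approach the paper sketches in the paragraph preceding the lemma (and in its commented-out two-line proof): delete $v$, color $G-v$ by minimality, observe that the hypothesis on $N_G(v)$ makes this a valid partial coloring of $G$, then extend to $v$ by counting forbidden colors. Your write-up is in fact more careful than the paper's, since you explicitly verify the only nontrivial point, namely that pairs of neighbors of $v$ remain at distance at most $2$ after deletion.
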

\begin{comment}
\begin{proof}
    By contradiction, let $\alpha$ be a distance-$2$ coloring of $G' = G-v$ with $2 \Delta +7$ colors. Let us extend $\alpha$ to $G$ by setting $\alpha(v)$ to a free color not used by any $2$-neighbor of $v$ (which exists by hypothesis). Then $\alpha$ is a distance-$2$ coloring of $G$, a contradiction. 
\end{proof}
\end{comment}

In particular, we get that $\delta(G)\geq 2$.
\begin{corollary}\label{lem:deg1forbif}
The graph $G$ does not contain any $1$-vertex. 
\end{corollary}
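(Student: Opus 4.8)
The plan is to apply Lemma~\ref{lem:minconstraint} directly to a hypothetical $1$-vertex, so the whole argument reduces to checking the two hypotheses of that lemma. Suppose for contradiction that $G$ contains a $1$-vertex $v$, and let $u$ be its unique neighbor. First I would dispatch the condition about pairs of neighbors of $v$ in $G-v$: since $v$ has exactly one neighbor, there is no pair of neighbors of $v$ at all, so the hypothesis ``all the pairs of neighbors of $v$ are at distance at most $2$ in $G-v$'' is vacuously satisfied.

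Then I would bound the number of vertices at distance at most $2$ from $v$. Any path of length at most $2$ starting at $v$ must begin with the edge $vu$, so every vertex at distance at most $2$ from $v$ is either $u$ itself or a neighbor of $u$ other than $v$. This gives at most $1 + (\deg_G(u)-1) = \deg_G(u) \le \Delta$ such vertices. Since $\Delta < 2\Delta + 7$, Lemma~\ref{lem:minconstraint} applies and produces a contradiction; hence $G$ contains no $1$-vertex.

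I do not expect any real obstacle here: this is an immediate corollary, and the only points needing (routine) care are noticing that the ``pairs of neighbors'' hypothesis is vacuous for a degree-$1$ vertex and carrying out the trivial count of distance-$\le 2$ vertices. If one wanted to be maximally terse, the proof is a single sentence invoking Lemma~\ref{lem:minconstraint} with the observation $\deg_G(u) \le \Delta < 2\Delta+7$.
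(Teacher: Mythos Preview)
Your argument is correct and is exactly the intended one: the paper simply records this as an immediate corollary of Lemma~\ref{lem:minconstraint} without writing out a proof, and your verification that the pair-of-neighbors hypothesis is vacuous and that $v$ has at most $\Delta<2\Delta+7$ vertices at distance at most~$2$ is precisely what is implicit there.
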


For brevity, in what follows, we only give the graph $G'$. Thus, we will not explicitly check that $G'<G$, $\Delta(G')\leq \Delta(G)$, and that the adjacencies at distance 2 are preserved. We only compute the number of forbidden colors for $v$ and check that it is smaller than $2\Delta + 7$. 

We may now extend the previous result to $\delta(G)\geqslant 3$, and show a number of other forbidden configurations.

\begin{lemma}\label{lem:deg2forbif}
The graph $G$ does not contain any $2$-vertex. 
\end{lemma}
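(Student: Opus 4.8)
The plan is to show that a $2$-vertex cannot occur in the minimal counterexample $G$, using Lemma~\ref{lem:minconstraint}. Suppose for contradiction that $v$ is a $2$-vertex with neighbors $x$ and $y$. I would set $G' = G - v$ (possibly augmented: if $x$ and $y$ are not adjacent and not already at distance $2$ in $G-v$ via some other vertex, add the edge $xy$ to preserve the fact that $x,y$ must still get distinct colors; since $xy$ would not be an edge-separator by Lemma~\ref{lem:cutedgeforbif}, this keeps the graph planar with $G' < G$ and $\Delta(G') \le \Delta$). Then $G'$ has a distance-$2$ $(2\Delta+7)$-coloring $\alpha$, which is a partial coloring of $G$, and it only remains to color $v$.

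The number of forbidden colors for $v$ is at most the number of vertices at distance at most $2$ from $v$ in $G$, other than $v$ itself. These are: $x$, $y$, the other neighbors of $x$ (at most $\Delta - 1$ of them, since one neighbor of $x$ is $v$), and the other neighbors of $y$ (at most $\Delta - 1$). This gives at most $2 + (\Delta - 1) + (\Delta - 1) = 2\Delta$ forbidden colors, which is strictly less than $2\Delta + 7$. Hence a free color exists for $v$, and $\alpha$ extends to a distance-$2$ $(2\Delta+7)$-coloring of $G$, contradicting that $G$ is a counterexample. Equivalently, one checks directly that $v$ satisfies the hypotheses of Lemma~\ref{lem:minconstraint}: in $G - v$, the only pair of neighbors of $v$ is $\{x, y\}$, which is at distance at most $2$ (either because $xy \in E(G)$, or because $x,y$ have the common neighbor $v$ which... — here one must be slightly careful, since $v$ is removed; this is exactly why the edge $xy$ is added when needed), and $v$ has at most $2\Delta < 2\Delta + 7$ neighbors at distance $2$.

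The only subtlety — and the place I would be most careful — is the distance-$2$ bookkeeping when $v$ is deleted: two vertices that were at distance $2$ in $G$ only through $v$ (namely $x$ and $y$) must remain constrained in $G'$, which is handled by adding $xy$; and one must make sure no new distance-$2$ pairs are created that were not already present, but adding a single edge between two vertices already forced apart cannot hurt. There is also the degenerate case $\Delta = 2$, where $G$ would be a union of paths and cycles, but since we are in the regime $\Delta \ge 9$ of Theorem~\ref{thm:main} this does not arise; in any event such graphs are trivially distance-$2$ colorable with far fewer than $2\Delta + 7$ colors. All the remaining steps are routine counting, so I expect no real obstacle beyond stating the edge-addition carefully.
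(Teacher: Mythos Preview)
Your proposal is correct and follows essentially the same approach as the paper: form $G' = G - v + xy$, color $G'$ by minimality, and extend to $v$ since at most $2\Delta < 2\Delta + 7$ colors are forbidden. The paper simply writes $G' = G - v + uw$ without the case analysis; note that the appeal to Lemma~\ref{lem:cutedgeforbif} is unnecessary (planarity and $\Delta(G')\le\Delta$ are immediate from suppressing a $2$-vertex), and your worry about ``new distance-$2$ pairs'' is harmless since extra constraints in $G'$ only help.
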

\begin{proof}
    By contradiction, let $v$ be a $2$-vertex of $G$. Let $u$ and $w$ be the two neighbors of $v$ in $G$ and $G' = G-v + uw$. By minimality of $G$, $G'$ admits a distance-$2$ $(2\Delta+ 7)$-coloring $\alpha$. The coloring $\alpha$ is a partial coloring of $G$ and forbids at most $2\Delta$ colors for $v$. We can extend $\alpha$ to $V(G)$, a contradiction.
\end{proof}

\begin{lemma}\label{lem:deg3forbif4}
The graph $G$ does not contain a $3$-vertex adjacent to a $5^-$-vertex. 
\end{lemma}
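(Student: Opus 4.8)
The plan is to delete the $3$-vertex and reconnect its neighbors cheaply enough that the reduction method used in the previous proofs applies. Suppose for contradiction that $G$ contains a $3$-vertex $v$ whose neighbors are $u,w_1,w_2$ with $\deg_G(u)\le 5$. I would take
$$G' \;=\; G - v + uw_1 + uw_2,$$
where the two edges are added only if not already present.

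Next I would check the conditions of the reduction. First, $G'<G$ because $|V(G')|=|V(G)|-1$. Second, $\Delta(G')\le\Delta$: the vertex $u$ loses the edge $uv$ and gains at most two edges, so $\deg_{G'}(u)\le\deg_G(u)+1\le 6\le\Delta$; each of $w_1,w_2$ loses its edge to $v$ and gains at most one edge, hence keeps degree at most $\Delta$; all other degrees are unchanged. Planarity is preserved since, once $v$ is deleted, $u$, $w_1$ and $w_2$ lie on a common face, so the chords $uw_1$ and $uw_2$ can be drawn inside that face without crossings. Third, every pair of vertices at distance at most $2$ in $G$ and avoiding $v$ is still at distance at most $2$ in $G'$: the only pairs that deleting $v$ could separate lie inside $N_G(v)=\{u,w_1,w_2\}$, and in $G'$ the vertex $u$ is adjacent to both $w_1$ and $w_2$, so the pairs $\{u,w_1\}$ and $\{u,w_2\}$ are at distance $1$ while $\{w_1,w_2\}$ is at distance at most $2$ via the path $w_1 u w_2$.

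By minimality of $G$, the graph $G'$ admits a distance-$2$ $(2\Delta+7)$-coloring $\alpha$, which is then a partial coloring of $G$ on $V(G)\setminus\{v\}$, and it only remains to color $v$. The colors forbidden for $v$ are those used on the vertices at distance at most $2$ from $v$ in $G$, that is on $\{u,w_1,w_2\}\cup\big(N_G(u)\cup N_G(w_1)\cup N_G(w_2)\big)\setminus\{v\}$; since $v$ belongs to each of $N_G(u)$, $N_G(w_1)$, $N_G(w_2)$, this set has size at most $\deg_G(u)+\deg_G(w_1)+\deg_G(w_2)\le 5+2\Delta<2\Delta+7$. Hence a color remains free for $v$, and extending $\alpha$ to it yields a distance-$2$ $(2\Delta+7)$-coloring of $G$, a contradiction.

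The forbidden-color count is the routine part; the only step where the hypothesis $\deg_G(u)\le 5$ is really used is the construction of $G'$, where the point is that $w_1$ and $w_2$ may both have degree $\Delta$, so we cannot simply add the edge $w_1w_2$ — instead we reconnect them through $u$, spending two extra edges there, which we can afford precisely because $u$ has small degree. I do not expect any genuine obstacle beyond getting this construction right.
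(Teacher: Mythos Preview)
Your proof is correct and follows exactly the paper's approach: the reduction $G'=G-v+uw_1+uw_2$ is the same construction (the paper phrases it as ``identifying $u$ and $v$''), and your forbidden-color count $2\Delta+5$ matches theirs. Your write-up is simply more explicit about the planarity, degree, and distance-preservation checks that the paper suppresses by convention.
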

\begin{proof}
    Assume by contradiction that $G$ has a $3$-vertex $v$ adjacent to a $5^-$-vertex $u$ such that $N(v) = \{u, v_1, v_2\}$. 
    Let $G'$ be the graph obtained by identifying $u$ and $v$, i.e. $G' = G - v + uv_1 + uv_2$. 
    Note that $\deg_{G'}(u) = \deg_G(u)+1 \le 6 \le \Delta$.
    By minimality, $G'$ admits a distance-$2$ $(2\Delta + 7)$-coloring $\alpha$.
    The coloring $\alpha$ is a partial coloring of $G$ and forbids at most $2\Delta + 5$ colors for $v$.
    Therefore, we can extend the coloring $\alpha$ to $v$, a contradiction.
\end{proof}

\begin{lemma}\label{lem:deg3forbif}
The graph $G$ does not contain a degree-$3$ vertex incident to two $3$-faces and adjacent to a vertex of degree at most $\min(10,\Delta)$ (see Figure \ref{fig:deg3bis} for an illustration).
\end{lemma}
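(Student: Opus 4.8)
The plan is to use the standard reduction described just before Lemma~\ref{lem:minconstraint}: assume $G$ contains the configuration, delete the degree-$3$ vertex $v$, check that no edges need to be added, apply minimality to $G'=G-v$, and bound the number of colors forbidden at $v$ by $2\Delta+6<2\Delta+7$.

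First I would fix notation. Let $v$ be a degree-$3$ vertex with $N(v)=\{v_1,v_2,v_3\}$ that is incident to two $3$-faces. Since $v$ has exactly three incident faces and (by Lemma~\ref{lem:cutedgeforbif} together with Corollary~\ref{lem:deg1forbif}) none of its incident edges is a bridge, these three faces are distinct, and the two triangular ones share a common incident edge $vv_2$ after relabeling; hence $v_1v_2,v_2v_3\in E(G)$ and in particular $v_2$ is adjacent to each of $v,v_1,v_3$. By hypothesis one of $v_1,v_2,v_3$ has degree at most $d_0:=\min(10,\Delta)$, and each $v_i$ has degree at most $\Delta$, so $\deg(v_1)+\deg(v_2)+\deg(v_3)\le 2\Delta+d_0\le 2\Delta+10$.

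Next, set $G'=G-v$. The only pairs of vertices of $G'$ whose distance could be affected by deleting $v$ lie in $N(v)$, and there $v_1v_2,v_2v_3\in E(G')$ while $v_1$ and $v_3$ are still at distance $2$ via $v_2$; so all distance-$\le 2$ adjacencies of $G$ are preserved, $G'$ is planar, $G'<G$, and $\Delta(G')\le\Delta$. By minimality, $G'$ admits a distance-$2$ $(2\Delta+7)$-coloring $\alpha$, which is a partial coloring of $G$. It remains to colour $v$. The colors forbidden for $v$ are exactly those used on $\bigl(N(v_1)\cup N(v_2)\cup N(v_3)\bigr)\setminus\{v\}$ (this set already contains $v_1,v_2,v_3$). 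Since $v\in N(v_i)$ for each $i$ and $v_2\in N(v_1)\cap N(v_3)$, a union bound gives
\[
\Bigl|\bigl(N(v_1)\cup N(v_2)\cup N(v_3)\bigr)\setminus\{v\}\Bigr|\;\le\;\sum_{i=1}^{3}\bigl(\deg(v_i)-1\bigr)-1\;=\;\deg(v_1)+\deg(v_2)+\deg(v_3)-4\;\le\;2\Delta+6.
\]
Thus fewer than $2\Delta+7$ colors are forbidden at $v$, so $\alpha$ extends to a distance-$2$ $(2\Delta+7)$-coloring of $G$, contradicting the choice of $G$.

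The one delicate point is the final counting step: one must genuinely exploit the edges $v_1v_2$ and $v_2v_3$ produced by the two $3$-faces to save the extra unit, since the naive estimate $\deg(v_1)+\deg(v_2)+\deg(v_3)-3$ only yields $2\Delta+7$, which is not enough. Everything else is routine bookkeeping about which vertices lie at distance at most $2$ from $v$.
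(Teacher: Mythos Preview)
Your proof is correct and follows essentially the same approach as the paper: delete $v$, observe that the two triangles make $v_1,v_2,v_3$ pairwise at distance at most $2$ in $G-v$, and bound the forbidden colors by $\deg(v_1)+\deg(v_2)+\deg(v_3)-4\le 2\Delta+6$. The paper presents the same count as $(2\Delta+d)-7+3$ but the arithmetic and the idea are identical; your write-up just makes the bookkeeping (distinct faces, which overlaps are used) more explicit.
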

 \begin{figure}[hbtp]
        \begin{center}
        \tikzstyle{vertex}=[circle,draw, minimum size=15pt, scale=1, inner sep=1pt]
        \tikzstyle{give}=[circle,draw, minimum size=15pt, scale=1, inner sep=1pt, fill=black!5, very thick]
        \tikzstyle{receive}=[circle,draw, minimum size=15pt, scale=1, inner sep=1pt, fill=black!5]
        \tikzstyle{fleche}=[->,>=latex]
        \begin{tikzpicture}[scale=1]
            
        \node (a0) at (0,0) [vertex,label=left:{$v$}] {3};
        \node (b0) at (1,1) [vertex] {$v_1$};
        \node (c0) at (1,-1) [vertex] {$v_3$};
        \node (d0) at (2,0) [vertex] {$v_2$};

        \draw (a0) to (b0);
        \draw (a0) to (c0);
        \draw (a0) to (d0);
        \draw (c0) to (d0);
        \draw (d0) to (b0);

        \end{tikzpicture}
        \end{center}
        \caption{Forbidden configuration of Lemma~\ref{lem:deg3forbif} where $\deg(v_i) \le \min(10, \Delta)$ for some $i \in \{1,2,3\}$}
        \label{fig:deg3bis}
    \end{figure}
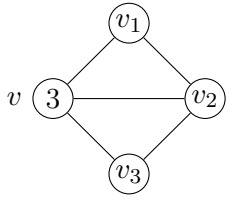
     \begin{proof}
    Assume by contradiction that $G$ has a $3$-vertex $v$ such that $N(v)=\{ v_1, v_2, v_3 \}$ and $v_2$ is adjacent to $v_1$ and $v_3$. 
    Let $G' = G - v$ and $d = \min(10,\Delta)$.
    Since $G$ is a minimal counterexample, $G'$ admits a distance-$2$ $(2\Delta + 7)$-coloring $\alpha$.
    The partial coloring $\alpha$ forbids at most $(2\Delta + d) - 7 + 3$ colors for $v$.
    Note that since $d < 11$, we have $2\Delta + d  - 4 < 2\Delta + 7$.
    So we can extend $\alpha$ to $v$, a contradiction.
\end{proof}

\begin{lemma}\label{lem:deg3forbif2}
The graph $G$ does not contain a $3$-vertex incident to a $3$-face and two $4$-faces when $\Delta \le 10$ (see Figure \ref{fig:deg3square}).
\end{lemma}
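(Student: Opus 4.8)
The approach follows the now-standard template: assume $G$ contains the configuration, delete the offending $3$-vertex $v$ (possibly adding edges to preserve distance-$2$ adjacencies), invoke minimality to obtain a distance-$2$ $(2\Delta+7)$-coloring $\alpha$ of the smaller graph, and count the forbidden colors at $v$, aiming to show this count is at most $2\Delta+6$ so that $\alpha$ extends.

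\medskip

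\noindent\textbf{Setting up.} Let $v$ be a $3$-vertex with $N(v)=\{v_1,v_2,v_3\}$, where (say) $v_1v_2$ bounds the $3$-face and the two $4$-faces are $vv_2xv_3$ and $vv_1yv_3$ for some vertices $x,y$ (here I am guessing the combinatorial arrangement forced by ``one $3$-face and two $4$-faces all incident to $v$''; the precise incidence pattern in Figure~\ref{fig:deg3square} will pin this down, and there may be a couple of sub-cases depending on which pair of the $v_i$ is adjacent and how the $4$-faces share edges). First I would check, using Lemma~\ref{lem:deg3forbif4}, that each $v_i$ has degree at least $6$, and using Lemma~\ref{lem:deg3forbif} together with the hypothesis $\Delta\le 10$, that $v$ is \emph{not} incident to two $3$-faces — consistent with the configuration. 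The deletion will be $G' = G - v$, possibly augmented with edges among $\{v_1,v_2,v_3,x,y\}$ to guarantee that every pair of vertices at distance $\le 2$ in $G$ through $v$ remains at distance $2$ in $G'$; concretely one adds $v_1v_3$ and $v_2v_3$ if they are not already present (they are at distance $2$ via $v$), after first checking via Lemma~\ref{lem:cutedgeforbif} that no such addition creates a multi-edge-forbidden situation or raises the maximum degree — this last point is where $\Delta\le 10$ and $\deg(v_i)\ge 6$ interact, so the edge-additions must be chosen carefully, perhaps by identifying $v$ with one of its neighbours rather than naively adding all missing edges.

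\medskip

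\noindent\textbf{The count.} The colors forbidden for $v$ are those appearing on $N(v)$ and on the second neighborhood of $v$. The three neighbors $v_1,v_2,v_3$ contribute at most $3$ colors. The vertices at distance exactly $2$ from $v$ are the neighbors of $v_1,v_2,v_3$ other than $v$ itself; there are at most $(\deg v_1 - 1)+(\deg v_2 - 1)+(\deg v_3-1) \le 3\Delta - 3$ of them naively, which is far too many. The savings come from the faces: because $v_1v_2$ is an edge (the $3$-face), $v_1$ and $v_2$ have that common edge; because of the two $4$-faces, $v_3$ shares a neighbor ($x$) with $v_2$ and a neighbor ($y$) with $v_1$. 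Moreover in $G'$ the added edges $v_1v_3,v_2v_3$ force $v_1,v_2,v_3$ to be mutually distinctly colored, and $x$ (resp. $y$) is at distance $2$ from $v_1$ and $v_3$ (resp. $v_2$ and $v_3$), pinning down more coincidences. The real point is that the three second-neighborhoods overlap enough — I expect to extract at least $3\Delta + 7 - (2\Delta + 6) = \Delta + 1$ worth of overlap/absent-colors — so after accounting for the $3$-face edge and the two shared vertices $x,y$, the forbidden set has size at most $3 + (3\Delta - 3) - (\text{overlaps}) \le 2\Delta + 6$. This is the step I expect to be the main obstacle: getting the overlap bookkeeping tight, since $3\Delta$ is well above the budget and we only gain back roughly $\Delta$, so every shared vertex and every forced color-equality among $v_1,v_2,v_3$ must be counted, and the argument likely needs the exact face structure of Figure~\ref{fig:deg3square} rather than the rough picture above.

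\medskip

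\noindent\textbf{Finishing.} Once the forbidden-color count is shown to be at most $2\Delta+6 < 2\Delta+7$, pick a free color for $v$, extend $\alpha$ to a distance-$2$ $(2\Delta+7)$-coloring of $G$, and conclude that $G$ was not a counterexample — contradiction. I would also double-check the boundary interaction with $\Delta = 9, 10$ (the hypothesis $\Delta\le 10$), since for those values the degrees of the $v_i$ can be as large as $\Delta$ and the slack is smallest; if the naive count is off by a constant there, a slightly cleverer choice of $G'$ (identifying $v$ with $v_3$, the vertex on both $4$-faces, so that its degree only increases by $1$ rather than adding two separate edges) should recover the needed room.
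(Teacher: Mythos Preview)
Your plan is the paper's approach, but you are making the execution harder than it is. The point you keep circling is that, with the face structure of Figure~\ref{fig:deg3square}, \emph{no edges need to be added at all}: in $G'=G-v$ the pair $v_1,v_2$ on the $3$-face is adjacent, and each of the two remaining pairs $\{v_2,v_3\}$, $\{v_1,v_3\}$ already shares a common neighbor (namely $x$, resp.\ $y$, the fourth vertex of the corresponding $4$-face). Thus all three pairs of neighbors of $v$ are at distance at most $2$ in $G-v$, and Lemma~\ref{lem:minconstraint} applies directly---there is no degree issue to manage and no identification trick needed.

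Once you see this, the ``main obstacle'' you anticipate dissolves. The naive count $3 + 3(\Delta-1)=3\Delta$ over-counts by exactly four: twice because $v_1,v_2$ each appear both in $N(v)$ and in the other's neighborhood, and once each for $x$ and $y$, which lie in two of the $N(v_i)$. Hence at most $3\Delta-4 = 3(\Delta-2)+2$ colors are forbidden for $v$, and $3\Delta-4 < 2\Delta+7$ holds precisely when $\Delta\le 10$, which is the hypothesis. The paper's proof is literally these two lines; your talk of adding $v_1v_3$, $v_2v_3$, checking $\Delta(G')$, or identifying $v$ with $v_3$ is all unnecessary.
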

 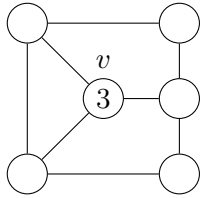
\begin{figure}[hbtp]
        \begin{center}
        \tikzstyle{vertex}=[circle,draw, minimum size=15pt, scale=1, inner sep=1pt]
        \tikzstyle{vertex1}=[circle,draw, minimum size=15pt, scale=1, inner sep=1pt]
        \tikzstyle{give}=[circle,draw, minimum size=15pt, scale=1, inner sep=1pt, fill=black!5, very thick]
        \tikzstyle{receive}=[circle,draw, minimum size=15pt, scale=1, inner sep=1pt, fill=black!5]
        \tikzstyle{fleche}=[->,>=latex]
        \begin{tikzpicture}[scale=1]
            
        \node (a0) at (1,0) [vertex1,label=above:{$v$}] {3};
        \node (b0) at (0,1) [vertex] {};
        \node (c0) at (0,-1) [vertex] {};
        \node (d0) at (2,0) [vertex] {};
        \node (e0) at (2,1) [vertex] {};
        \node (f0) at (2,-1) [vertex] {};
        
        \draw (a0) to (b0);
        \draw (a0) to (c0);
        \draw (a0) to (d0);
        \draw (c0) to (f0);
        \draw (d0) to (e0);
        \draw (d0) to (f0);
        \draw (b0) to (e0);
        \draw (c0) to (b0);

        \end{tikzpicture}
        \end{center}
        \caption{Forbidden configuration of Lemma~\ref{lem:deg3forbif2}}.
        \label{fig:deg3square}
    \end{figure} 
\begin{proof}
    Assume by contradiction that $G$ has a such $3$-vertex $v$. 
    Let $G' = G - v$.
    Since $G$ is a minimal counterexample, $G'$ admits a distance-$2$ $(2\Delta + 7)$-coloring $\alpha$. 
    The partial coloring $\alpha$ forbids at most $3(\Delta - 2) + 2< 2\Delta + 7$ colors for $v$. Hence we can extend $\alpha$ to $v$, a contradiction.
\end{proof}

A vertex is \emph{triangulated} if it is only incident to $3$-faces.

\begin{lemma}\label{lem:deg4forbif3}
The graph $G$ does not contain a triangulated $4$-vertex adjacent to a triangulated $5$-vertex and to a vertex of degree less than $12$ (see Figure~\ref{fig:deg4tri5}). 
\end{lemma}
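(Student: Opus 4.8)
The plan is to follow the same discharging-style deletion argument used for the earlier forbidden configurations (Lemmas~\ref{lem:deg3forbif}--\ref{lem:deg3forbif2}). Assume for contradiction that $G$ contains a triangulated $4$-vertex $v$ adjacent to a triangulated $5$-vertex $u$ and to a vertex $w$ with $\deg_G(w)\leq 11$. Let $G'=G-v$; by minimality $G'$ admits a distance-$2$ $(2\Delta+7)$-coloring $\alpha$, which is a partial coloring of $G$. It then suffices to bound the number of colors forbidden for $v$ by $2\Delta+6$.

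The key step is the count of the $2$-neighborhood of $v$. Since $v$ is triangulated and has degree $4$, its four neighbors $v_1=u,v_2,v_3,v_4=w$ (say, in cyclic order) pairwise overlap along the incident triangles: consecutive neighbors $v_i,v_{i+1}$ are adjacent, so each contributes at most $\deg_G(v_i)-2$ new vertices at distance exactly $2$ from $v$ (subtracting $v$ itself and the shared neighbor on one side), and one has to be slightly careful not to double-count the common neighbors. Concretely, I would write the set of vertices at distance at most $2$ from $v$ (other than $v$) as $\bigcup_{i=1}^4 N_G(v_i)$, estimate its size as $\sum_i (\deg_G(v_i)-1)$ minus the number of edges among the $v_i$ on the ``outer'' cycle of the wheel around $v$ (which is at least $3$, or exactly the number of outer triangles), and plug in $\deg_G(u)=5$, $\deg_G(w)\leq 11$, and $\deg_G(v_2),\deg_G(v_3)\leq\Delta$. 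The point of requiring $u$ triangulated is presumably that it forces an extra adjacency (or extra shared vertex) among the neighbors of $v$, shaving off one more color; the slack $2\Delta+7$ versus the naive bound $2(\Delta-1)+(5-1)+(11-1)-(\text{shared})$ should close exactly when these structural adjacencies are taken into account. I expect the arithmetic to land at most $2\Delta+6$, leaving a free color for $v$.

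The main obstacle will be the bookkeeping of shared neighbors: a triangulated $4$-vertex sits at the center of a $4$-wheel whose rim $v_1v_2v_3v_4$ is a $4$-cycle, and the two triangulated vertices $u$ and the $5$-vertex constraint interact with whether the rim has chords and with how many common neighbors successive $v_i$ share beyond $v$ itself. I would handle this by fixing the cyclic labeling, using Theorem~\ref{thm:edge} (and Lemmas~\ref{lem:deg3forbif}, \ref{lem:deg3forbif4}) to rule out degenerate local structures, and then doing the inclusion--exclusion carefully, possibly splitting into a couple of cases according to which neighbor of $v$ is the low-degree vertex $w$ (in particular whether $w\in\{v_2,v_3\}$ is adjacent to $u$ or not). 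If a direct deletion of $v$ does not give enough slack, the fallback is to delete $v$ and add one or two edges among its former neighbors (as in Lemma~\ref{lem:deg3forbif4}) to transfer the constraint, but I expect plain deletion to suffice given the generous $+7$ additive term.
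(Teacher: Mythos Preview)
Your plan is the paper's plan: delete $v$, use the $4$-wheel rim to keep every pair of neighbors of $v$ at distance at most $2$ in $G-v$, and count. What you are hedging on is exactly the crux, so let me pin it down. Label the neighbors of $v$ as $v_1=u,v_2,v_3,v_4$ in cyclic order (so $v_1v_2v_3v_4$ is the rim $4$-cycle). Because $u$ is a \emph{triangulated} $5$-vertex, its five neighbors also form a cycle; three of them are $v_4,v,v_2$, and the remaining two, say $f$ and $g$, must be adjacent to $v_2$ and to $v_4$ respectively. Thus $f\in N(v_1)\cap N(v_2)$ and $g\in N(v_1)\cap N(v_4)$: the triangulation of $u$ buys you \emph{two} extra overlaps, not one. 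Now
\[
\Bigl|\bigcup_{i=1}^4 N(v_i)\setminus\{v\}\Bigr|\le \sum_{i=1}^4 \deg(v_i)-3-4-2-1
= (5+\deg w+2\Delta)-10=2\Delta+\deg w-5\le 2\Delta+6,
\]
where $-3$ accounts for $v$ lying in all four $N(v_i)$, $-4$ for the four rim overlaps, $-2$ for $f,g$, and $-1$ for removing $v$. This is already strictly below $2\Delta+7$, so a color remains for $v$. No case split on the position of $w$, no edge additions, and no appeal to Theorem~\ref{thm:edge} or Lemmas~\ref{lem:deg3forbif}--\ref{lem:deg3forbif4} are needed; the paper's proof is this one-line count.
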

    \begin{figure}[hbtp]
        \begin{center}
        \tikzstyle{vertex}=[circle,draw, minimum size=10pt, scale=1, inner sep=1pt]
        \tikzstyle{give}=[circle,draw, minimum size=15pt, scale=1, inner sep=1pt, fill=black!5, very thick]
        \tikzstyle{receive}=[circle,draw, minimum size=15pt, scale=1, inner sep=1pt]
        \tikzstyle{fleche}=[->,>=latex]
        \begin{tikzpicture}[scale=1]

            \node (a0) at (0,0) [receive] {4};
            \node at (-.3,.3) {$v$};
            \node (b0) at (1,0) [receive,label=right:{$w$}]{5};
            \node (c0) at (-1,0) [vertex] {};
            \node (d0) at (0,1) [vertex]{};
            \node (e0) at (0,-1) [vertex]{};
            \node (f0) at (2,1) [vertex]{};
            \node (g0) at (2,-1) [vertex]{};

            \draw (b0) to (d0);
            \draw (c0) to (d0);
            \draw (b0) to (e0);
            \draw (c0) to (e0);
            \draw (a0) to (b0);
            \draw (a0) to (c0);
            \draw (a0) to (d0);
            \draw (a0) to (e0);
            \draw (b0) to (f0);
            \draw (g0) to (f0);
            \draw (b0) to (g0);
            \draw (d0) to (f0);
            \draw (g0) to (e0);

        \end{tikzpicture}
        \end{center}
        \caption{Forbidden configuration of Lemma~\ref{lem:deg4forbif3} with a neighbor $u$ of degree at most $12$}
        \label{fig:deg4tri5}
    \end{figure}
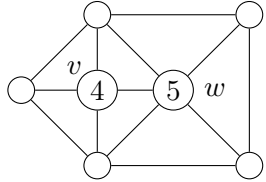 
\begin{proof}
    Assume by contradiction that $G$ has a triangulated $4$-vertex $v$ such that $v$ is adjacent to a triangulated $5$-vertex $w$ and a vertex of degree at most $d < 12$. 
    Let $G' = G - v$. 
    By minimality, $G'$ admits a distance-$2$ $(2\Delta + 7)$-coloring $\alpha$. 
    The partial coloring $\alpha$ forbids for $v$ at most $(2\Delta + d) - 2\cdot3 - 2 + 3 = 2\Delta + d - 5 < 2d + 7$ colors when $d < 12$. 
    So we can extend $\alpha$ to $v$, a contradiction.
\end{proof}

\section{Discharging rules}

We give an initial charge $\omega_0(v) = \deg(v) - 6$ to every vertex $v$ and an initial charge $\omega_0(f) = 2\deg(f) - 6$ to every face $f$.
A well-known consequence of Euler's formula is the following equality:
$$ \sum_{v \in V(G)}{(\deg(v) - 6)} + \sum_{f \in F(G)}{(2\deg(f) - 6)} =- 12 < 0 \ \ (\star) $$
The charge will be transferred via $4$ discharging rules in such a way that the final charge of every face and every vertex will be non-negative, which contradicts $(\star)$ and proves Theorem~\ref{thm:main}.

The discharging procedure is in two steps. We first apply the following discharging rule :

\begin{itemize}
    \item[($\regle{rface}$)] A $d$-face gives $d-3$ to each incident $5^+$-vertex. 
\end{itemize}

Before going to the second step and defining the other discharging rules, we give some definitions. 
A \emph{weak} vertex is a vertex $v$ with a negative charge after applying $R_{\ref{rface}}$.
A \emph{strong} vertex is a vertex which is not weak.  
Note that every weak vertex has degree at most $5$ and then every $6^+$-vertex is strong.

A vertex $u$ is \emph{next to a vertex $w$ around a vertex $v$} if $u$ and $w$ are consecutive in the cyclic ordering of the neighbors of $v$ in the plane embedding. 
Note that a vertex $u$ is next to at most two other vertices around a fixed vertex $v$.

To simplify the upcoming analysis, some rules make charge transit through some intermediate vertices. Note that this only affects the final charge of the source and target vertices.

We use these definitions to define the other discharging rules (see Figure~\ref{fig:rulesbis2}) :

\begin{itemize}
    \item[($\regle{r11}$)] A $11$-vertex gives $\frac{5}{11}$ to each weak neighbor if all its neighbors are weak. Otherwise, $R_{\ref{r11+}}$ applies.
    
    \item[($\regle{r11+}$)] A $11^+$-vertex gives $\frac{1}{2}$ to each weak neighbor $w$. Moreover, if there are two strong vertices next to $w$, an additional $\frac{1}{4}$ transits through them before reaching $w$. Therefore, the total charge moved is either $\frac 12$ or $1$.

    \item[($\regle{r7}$)] A vertex $v$ of degree $7 \le d \le 10$ gives $\frac{\omega_0}{d}$ to each weak neighbor $w$, plus $\frac{\omega_0}{2d}$ transiting through each strong vertex next to $w$. So $w$ can receive $\frac{\omega_0}{d}$, $\frac{3\omega_0}{2d}$ or $\frac{2\omega_0}{d}$ from $v$.
\end{itemize}

 \begin{figure}[hbtp]
        \begin{center}
        \tikzstyle{vertex}=[circle,draw, minimum size=15pt, scale=1, inner sep=1pt]
        \tikzstyle{give}=[circle,draw, minimum size=15pt, scale=1, inner sep=1pt, fill=black!5, very thick]
        \tikzstyle{receive}=[circle,draw, minimum size=15pt, scale=1, inner sep=1pt, fill=black!5]
        \tikzstyle{fleche}=[->,>=latex]
        \begin{tikzpicture}[scale=1]
            \node [draw] at(-1,0){$R_{\ref{rface}}$};
            \node (a0) at (2,0) [give] {$F$};
            \node (b0) at (4,0) [receive] {$5^-$};
            \draw [fleche](a0) to  node[pos=0.5,fill=white] {$d-3$} (b0); 
            \node (c0) at (0,0) [vertex]{};
            \node (d0) at (2,-1) [vertex]{};
            \node (e0) at (2,1) [vertex]{};
            \draw (b0) to (d0);
            \draw (c0) to (d0);
            \draw (b0) to (e0);
            \draw (c0) to (e0);

        \tikzset{xshift=7cm}
            
            \node [draw] at(-1,0){$R_{\ref{r11}}$};
            \node (a4) at (0,0) [give] {$11$};
            \node (b4) at (4,0) [receive] {$w$};
            \draw [fleche](a4) to  node[pos=0.5,fill=white] {$\frac{5}{11}$} (b4);

        \tikzset{yshift=-3.5cm, xshift=-7cm}
        
        \node [draw] at(-1,0){$R_{\ref{r11+}}$};
            \node (a2) at (0,0) [give] {$11^+$};
            \node (b2) at (4,0) [receive] {$w$};
            \node (c2) at (2,-1.5) [vertex] {$s$};
            \node (d2) at (2,1.5) [vertex] {$s$};
            \draw [fleche](a2) to  node[pos=0.5,fill=white] {$\frac{1}{2}$} (b2); 
            \draw [fleche](a2) to  node[pos=0.5,fill=white] {$\frac{1}{4}$} (2,1) to (b2); 
            \draw [fleche](a2) to  node[pos=0.5,fill=white] {$\frac{1}{4}$} (2,-1) to (b2); 
            \draw (a2) to (c2);
            \draw (a2) to (d2);
            \draw [dashed] (b2) to (c2);
            \draw [dashed] (b2) to (d2);
            
\tikzset{xshift=7cm}
            
            \node [draw] at(-1,0){$R_{\ref{r7}}$};
            \node (a6) at (0,0) [give] {$7/10$};
            \node (b6) at (4,0) [receive] {$w$};
            \node (c6) at (2,1.5) [vertex] {$s$};

            \draw [fleche](a6) to  node[pos=1,fill=white,inner sep = 0pt] {$\frac{\omega_0}{2d}$} (2, .9) to (b6); 
            \draw [fleche](a6) to  node[pos=0.5,fill=white] {$\frac{\omega_0}{d}$} (b6); 
            \draw (a6) to (c6);
            \draw [dashed] (b6) to (c6);

        \end{tikzpicture}
        \end{center}
        \caption{Discharging rules. Bold nodes represent the origin of the weight. Labels $s$ and $w$ denote strong and weak vertices. Non-straight arrows denote the charge transiting through strong neighbors.}
        \label{fig:rulesbis2}
    \end{figure}
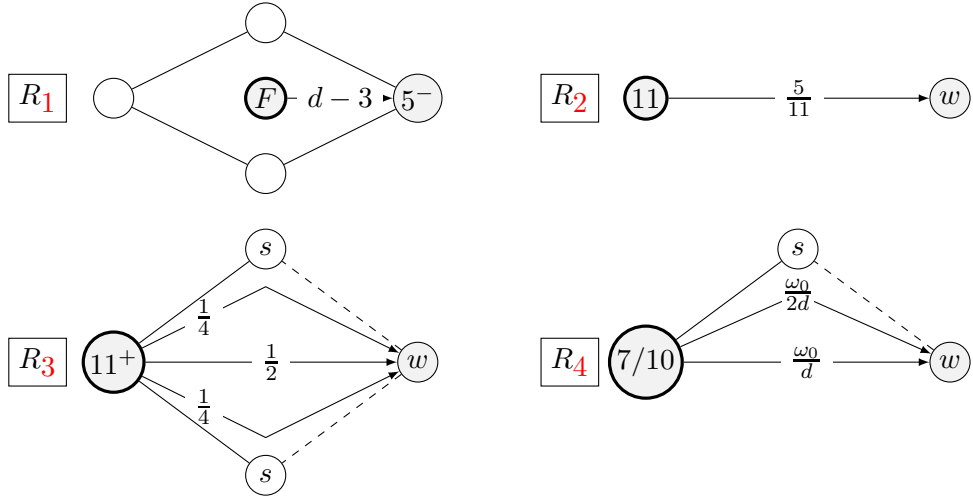

\section{Final charges}

We prove in this section that the final charges are non-negative. 
We separate the proof for the faces and then for vertices depending on their degree.

\begin{lemma}\label{claim:faces}
    The faces of $G$ have non-negative charge after applying the discharging rules.
\end{lemma}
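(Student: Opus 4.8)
The plan is to go through faces by degree, using only rule $R_{\ref{rface}}$, which is the only rule that touches faces. The initial charge of a $d$-face is $\omega_0(f)=2\deg(f)-6$, and it gives away $d-3$ to each incident $5^+$-vertex (equivalently, each \emph{strong} vertex that could receive). First I would dispatch the trivial case: a $3$-face has $\omega_0 = 0$ and gives nothing (since $d-3=0$), so its final charge is $0$. For $d\ge 4$, a $d$-face has at most $d$ incident vertices, and it only sends charge to the $5^+$-vertices among them; the key tool is Theorem~\ref{thm:edge}, which says a $4^+$-face has \emph{at most two} vertices of degree less than $\Delta$ (hence at most two vertices that fail to be $5^+$, assuming $\Delta\ge 9 > 5$), and these two are consecutive. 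So a $d$-face sends charge to at least $d-2$ of its incident vertices, and at most $d$ of them.

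The heart of the computation is then the inequality $\omega_0(f) - (\text{number of }5^+\text{-vertices})\cdot(d-3)\ge 0$, i.e. $2d-6 \ge k(d-3)$ where $k\le d$ is the number of recipients. Since $d-3>0$ for $d\ge 4$, the worst case is $k=d$, giving $2d-6 \ge d(d-3)$, which fails for $d\ge 5$. So the naive count is not enough, and this is where Theorem~\ref{thm:edge} must be used more carefully: for $d\ge 5$ I would argue that a $d$-face cannot have all $d$ incident vertices be $5^+$-vertices \emph{and still need to give each of them $d-3$} — wait, actually it can. The real point must be that for $d\ge 5$ we instead bound differently: $\omega_0(f) = 2d-6 \ge d$ when $d \ge 6$, and a $d$-face gives at most $1$ per vertex only when... no. Let me reconsider: the correct reading is surely that $R_{\ref{rface}}$ gives $d-3$ \emph{total is fine}: for $d=4$, give $1$ to each of $\le 4$ vertices, charge $2\cdot4-6=2$, but $4$ recipients needs $4$ — so even $d=4$ needs Theorem~\ref{thm:edge}: at most... hmm, $\Delta\ge 9$ so $5^+$ is weaker than $<\Delta$; a $4$-face has $\le 2$ vertices of degree $<\Delta$, so $\ge 2$ vertices of degree $\ge\Delta\ge 9$, but that still allows up to $4$ recipients. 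So I must be misreading the rule direction.

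The resolution: $R_{\ref{rface}}$ sends to $5^+$-vertices, so I should count how few $5^+$-vertices a face can avoid — but that is backwards. The genuinely useful direction of Theorem~\ref{thm:edge} here, I now believe, is combined with a parity/size count showing a $d$-face with $d\ge 4$ has incident-vertex multiset of size exactly $d$ only if it is a cycle; the actual saving is that $2\deg(f)-6 - \deg(f)\cdot(\deg(f)-3)\ge 0$ is false, so the lemma's proof must instead observe that $R_{\ref{rface}}$ as stated gives $d-3$ \emph{to each incident $5^+$-vertex}, and $2d - 6 = 2(d-3) + (-0)$... for $d=3$: $0$. For $d\ge 4$: rewrite $2d-6 = (d-3) + (d-3)$, so a $d$-face can afford to pay $d-3$ to exactly two vertices with $\omega_0$ left over $2d-6-2(d-3)=0$; hence I expect the true claim is that \emph{every $4^+$-face is incident to at most $2$ vertices receiving charge}, which would follow if every vertex of degree in $\{5,\dots,\Delta-1\}$... no. The main obstacle, then, is pinning down exactly why a $d$-face pays out at most $2(d-3)$ total — I would resolve it by showing via Theorem~\ref{thm:edge} and $\Delta\ge 9$ that the relevant accounting is per-edge rather than per-vertex, or that large faces are handled by $2d-6\ge$ (number of recipients) which holds once $d\ge 6$ since then $2d-6\ge d\ge$ (recipients counted with the right weight after transit cancellation). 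I would write the $d=3,4,5$ cases explicitly and then observe $2d-6\ge d(d-3)$ is the wrong bound, so for $d\ge 6$ use that each recipient gets $d-3$ but there are structural limits; the cleanest fix is to recheck whether $R_{\ref{rface}}$ perhaps gives a \emph{total} of $d-3$, in which case every $4^+$-face trivially keeps $2d-6-(d-3)=d-3\ge 1\ge 0$, and that is the argument I would present.
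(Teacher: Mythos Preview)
Your proposal never converges to a proof; it is a stream of consciousness that ends on a wrong guess (that $R_{\ref{rface}}$ distributes a \emph{total} of $d-3$). The source of your confusion is the direction of the rule: $R_{\ref{rface}}$ sends $d-3$ from a $d$-face to each incident $5^{-}$-vertex, not $5^{+}$-vertex. (The text of the rule contains a typo; the accompanying figure, and every later use of the rule for degree-$3,4,5$ vertices, make the intended reading unambiguous.) Once you read the rule correctly, your attempted inequality $2d-6\ge k(d-3)$ has $k$ equal to the number of $5^{-}$-vertices on the face, and the task is to show $k\le 2$.

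That bound is exactly what Theorem~\ref{thm:edge} provides: on a $4^{+}$-face there are at most two vertices of degree strictly less than $\Delta$. Since $\Delta\ge 9>5$, every $5^{-}$-vertex has degree $<\Delta$, so there are at most two of them on any $4^{+}$-face. Hence a $d$-face with $d\ge 4$ gives away at most $2(d-3)=2d-6=\omega_0(f)$ and ends with non-negative charge; a $3$-face gives nothing and keeps its initial charge $0$. That is the paper's argument in full. Your repeated attempts to use Theorem~\ref{thm:edge} to \emph{lower}-bound the number of recipients, or to find an alternative accounting for $d\ge 6$, are all aimed at the wrong inequality.
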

\begin{proof}
    Let $d \geq 3$ and let $f$ be a $d$-face. The initial charge of $f$ is $2d - 6 \geq 0$.
    If $d = 3$, $f$ does not give nor receive any charge and its final charge is 0.
    
    Otherwise, $f$ is a $4^+$-face and it only gives charge because of $R_{\ref{rface}}$.     By Lemma~\ref{thm:edge}, $f$ does not contain two non-adjacent $5$-vertices, hence $f$ has at most two $5^-$-vertices on its boundary and gives at most $2\cdot (d - 3) = 2d - 6$.
    So the final charge of $f$ is non-negative.
\end{proof}

\begin{lemma}
\label{lem:ok7}
    Any vertex of degree at least 7 and at most 10 has a positive charge after applying the discharging rules.
\end{lemma}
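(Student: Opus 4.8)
The plan is to follow the charge of a fixed vertex $v$ of degree $d$ with $7 \le d \le 10$ through the two discharging phases, starting from $\omega_0(v) = d-6 \ge 1$. In the first phase only $R_{\ref{rface}}$ is applied, and since $v$ is a $5^+$-vertex it can only \emph{receive} charge there: it gets $\deg(f)-3 \ge 0$ from each incident face $f$. Hence after $R_{\ref{rface}}$ the charge of $v$ equals $(d-6) + \sum_{f\ni v}(\deg(f)-3) \ge d-6$, and it is strictly larger than $d-6$ as soon as $v$ is incident to a $4^+$-face.

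For the second phase I would first note that $v$ is \emph{strong}, since every weak vertex has degree at most $5$ while $d\ge 7$. Consequently $v$ is never a target of $R_{\ref{r11}}$, $R_{\ref{r11+}}$ or $R_{\ref{r7}}$ (these only feed weak vertices), and it is not a source of $R_{\ref{r11}}$ or $R_{\ref{r11+}}$ (whose sources have degree at least $11$). Charge may transit through $v$, but by design that leaves its charge unchanged. So the only net effect of the second phase on $v$ is the charge it sends out via $R_{\ref{r7}}$, and it remains to bound this total by $d-6$.

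To bound it I would walk around the cyclic order of the neighbors of $v$. If all $d$ of them are weak, then by $R_{\ref{r7}}$ each gets exactly $\tfrac{d-6}{d}$ (there is no strong neighbor to route extra charge), so $v$ sends out $d\cdot\tfrac{d-6}{d}=d-6$. Otherwise the neighbors split into $k\ge 1$ maximal weak runs alternating cyclically with $k$ nonempty maximal strong runs. In a maximal weak run of length $\ell$ each of its $\ell$ vertices gets $\tfrac{d-6}{d}$ directly, while the extra $\tfrac{d-6}{2d}$ per consecutive strong neighbor is collected only at the two ends of the run — one $\tfrac{d-6}{2d}$ at each end when $\ell\ge 2$, and both at the unique vertex when $\ell=1$ — so the run absorbs $(\ell+1)\tfrac{d-6}{d}$. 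Summing over the runs, $v$ sends out $(|W|+k)\tfrac{d-6}{d}$, where $|W|$ is its number of weak neighbors; since the $k$ maximal strong runs are nonempty there are at least $k$ strong neighbors, so $|W|+k\le d$ and the total is at most $d-6 = \omega_0(v)$.

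Combining the two phases, the final charge of $v$ is at least $\big((d-6)+\sum_{f\ni v}(\deg(f)-3)\big) - (d-6) = \sum_{f\ni v}(\deg(f)-3) \ge 0$, which suffices for the contradiction with $(\star)$; and it is strictly positive whenever $v$ lies on a $4^+$-face. I expect the only genuinely fiddly point to be the per-run bookkeeping of the transiting charges: checking that the two endpoint contributions of a weak run are never double counted, that the length-$1$ runs and the all-weak (wrap-around) case are handled correctly, and that the inequality $|W|+k\le d$ is applied in the right place.
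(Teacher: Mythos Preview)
Your core argument is correct and the inequality $|W|+k\le d$ yields the same total $\omega_0$ as the paper, but two points deserve comment. First, $R_{\ref{rface}}$ actually feeds only $5^-$-vertices (the ``$5^+$'' in the displayed rule is a typo, as the proofs of Lemma~\ref{claim:faces} and of the degree-$3$, $4$, $5$ lemmas make clear, and as the paper's own proof of this lemma confirms by saying ``only $R_{\ref{r7}}$ may apply''); hence your vertex receives nothing in phase one, the extra term $\sum_{f\ni v}(\deg(f)-3)$ should be dropped, and the claim of strict positivity on a $4^+$-face is unfounded. This is harmless for the non-negativity conclusion.

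Second, the paper replaces your run decomposition by a one-line per-edge count: for each neighbor $x$ of $v$, at most $\tfrac{\omega_0}{d}$ leaves $v$ along the edge $vx$ --- the direct $\tfrac{\omega_0}{d}$ if $x$ is weak, or at most two transiting packets of $\tfrac{\omega_0}{2d}$ if $x$ is strong --- so the total sent is at most $d\cdot\tfrac{\omega_0}{d}=\omega_0$. This is the same idea as yours (attributing the transiting charge to the strong neighbor it passes through rather than to the weak endpoint of a run), but it sidesteps the case analysis on run lengths and the inequality $|W|+k\le d$ entirely.
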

\begin{proof}
    Let $v$ be a vertex of degree $d$, with $7 \le d \le 10$.
    The initial charge of $v$ is $\omega_0 = d - 6$.
    Note that only $R_{\ref{r7}}$ may apply.
    
    The vertex $v$ gives at most $\frac{\omega_0}{d}$ to adjacent weak vertices.
    Moreover, every strong neighbor of $v$ is next to at most two vertices around $v$, so at most $2\cdot\frac{\omega_0}{2d} = \frac{w_0}{d}$ charge transits through each strong vertex by $R_{\ref{r7}}$. The other neighbors of $v$ do not receive any charge from $v$. 
    
    Therefore, at most  $\frac{\omega_0}{d}$ leaves $v$ towards each of its $d$ neighbors, thus the final charge of $v$ is at least $\omega_0 - d \cdot \frac{\omega_0}{d} \ge 0$.
\end{proof}

\begin{lemma}
    Vertices of degree at least 12 have a positive charge after applying the discharging rules.
\end{lemma}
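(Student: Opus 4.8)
The strategy is the standard "count what leaves, bound it by the initial charge" argument, but the technical heart is to show that a $12^+$-vertex $v$ never sends more than $1$ unit of charge along any single edge direction, and in fact sends strictly less than $\tfrac{\deg(v)-6}{\deg(v)}$ on average. Let me denote $d = \deg(v) \ge 12$, so the initial charge is $\omega_0(v) = d - 6 \ge 6 > 0$. Only rules $R_{\ref{r11}}$, $R_{\ref{r11+}}$, $R_{\ref{r7}}$ could make $v$ give charge away, but since $d \ge 12$ the only applicable rule is $R_{\ref{r11+}}$ (the rule $R_{\ref{r11}}$ is only for exactly $11$-vertices, and $R_{\ref{r7}}$ only for degrees $7$ through $10$). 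So I only need to track the outgoing charge prescribed by $R_{\ref{r11+}}$.

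First I would fix the cyclic ordering $w_1, \dots, w_d$ of the neighbors of $v$ around $v$. By $R_{\ref{r11+}}$, $v$ gives $\tfrac12$ directly to each weak neighbor $w_i$, plus an extra $\tfrac14$ routed through each strong vertex that is next to $w_i$ around $v$; such a transiting strong vertex must itself be a neighbor $w_j$ of $v$ adjacent to $w_i$ in the cyclic order, i.e. $j = i \pm 1$. The key combinatorial step is therefore: charge $\tfrac14$ leaves $v$ "at position $j$" only when $w_j$ is strong and at least one of $w_{j-1}, w_{j+1}$ is weak, and charge $\tfrac12$ leaves $v$ "at position $i$" only when $w_i$ is weak. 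Grouping the total outgoing charge by position index, position $i$ contributes $\tfrac12$ if $w_i$ is weak, and at most $\tfrac14 + \tfrac14 = \tfrac12$ if $w_i$ is strong (one $\tfrac14$ for each of its two cyclic neighbors being weak). Hence at most $\tfrac12$ leaves $v$ per neighbor position, so the total outgoing charge is at most $\tfrac{d}{2}$, and the final charge of $v$ is at least $(d-6) - \tfrac{d}{2} = \tfrac{d}{2} - 6 \ge 0$, which is strictly positive once $d \ge 13$ and equals $0$ when $d = 12$.

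The main obstacle — and the place the argument above is slightly too lossy — is the boundary case $d = 12$, where I have only shown the final charge is $\ge 0$ but the lemma claims it is \emph{positive}. To get strict positivity at $d = 12$, I would argue that the bound "$\tfrac12$ leaves per position" cannot be tight at every position simultaneously: if some neighbor $w_i$ is weak then position $i$ contributes exactly $\tfrac12$ but contributes $0$ from its "strong transit" role, so the total is strictly less than $\tfrac{d}{2}$ unless \emph{every} neighbor of $v$ is weak and every weak neighbor has two strong cyclic-neighbors — which is impossible, since "all neighbors weak" and "some neighbor strong" cannot both hold. More carefully: if all $d$ neighbors of $v$ are weak, then no $\tfrac14$ transit ever occurs (a transit requires a \emph{strong} intermediate vertex), so $v$ gives only $\tfrac12 \cdot d = 6 < d - 6$ for $d = 12$ would need $d=12 \Rightarrow 6 < 6$, false — so I must instead observe that with all neighbors weak, $v$ gives exactly $d/2 = 6$ and the charge is exactly $0$; so I genuinely need an extra structural input here. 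I would resolve this by invoking the forbidden configurations (Lemmas~\ref{lem:deg3forbif}, \ref{lem:deg4forbif3}, etc.): a $12$-vertex all of whose neighbors are weak $5^-$-vertices forces, e.g., a triangulated $4$-vertex adjacent to a $12^-$-vertex, or a $3$-vertex adjacent to a $\le 10$-vertex configuration, contradicting reducibility — alternatively the intended reading may be that $R_{\ref{r11+}}$ as stated routes the $\tfrac14$ only when \emph{two} strong vertices flank $w$, so a weak $w$ with one strong and one weak flank gets only $\tfrac12$, making the per-position bound $\tfrac12$ strict somewhere. I would check the precise statement of $R_{\ref{r11+}}$ against the $d=12$ all-weak scenario and, if needed, add the one-line appeal to the relevant reducible configuration to close the gap; the rest is the routine positional double-counting above.
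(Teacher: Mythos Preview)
Your core argument---at most $\tfrac12$ of charge leaves $v$ toward each neighbor position, giving final charge $\ge d-6-\tfrac{d}{2}=\tfrac{d}{2}-6\ge 0$---is exactly what the paper does (it just says ``the same analysis as in Lemma~\ref{lem:ok7}'' and writes the one-line inequality). So on the substance you are fine and aligned with the paper.

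Where you diverge is in worrying about strict positivity at $d=12$. The paper does \emph{not} prove strict positivity: its own proof ends with ``$\ge 0$'', and the introductory sentence of the section states the goal as showing that final charges are \emph{non-negative}. The word ``positive'' in the lemma heading is sloppy; non-negativity is all that is needed to contradict the Euler-formula identity $\sum\omega_0=-12<0$. Your attempt to squeeze out a strict inequality via forbidden configurations is therefore unnecessary, and as you yourself noticed, it is not actually completed (the all-weak-neighbors case for a $12$-vertex gives final charge exactly $0$ and is not directly excluded by any of the listed reducible configurations). Simply drop that paragraph and state the conclusion as $\ge 0$; that matches the paper and suffices.
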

\begin{proof}
    Let $v$ be a vertex of degree $d \ge 12$. Its initial charge is $d - 6$.
    Note that only $R_{\ref{r11+}}$ may apply. The same analysis as in Lemma~\ref{lem:ok7} yields that at most $\frac{1}{2}$ charge leaves $v$ towards each of incident edges, thus its final charge is at least $d-6 - d \cdot \frac{1}{2} \ge 0$.
\end{proof}

\begin{lemma}
    Vertices of degree 11 have a positive charge after applying the discharging rules.
\end{lemma}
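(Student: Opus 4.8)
The plan is to work directly with the initial charge of a degree-$11$ vertex $v$, namely $\omega_0(v)=11-6=5$, and show that at most $5$ units leave $v$. First I would record that $v$ does not participate in $R_{\ref{r7}}$ (which concerns degrees $7$ to $10$) and that, as a $6^+$-vertex with charge $5>0$ after $R_{\ref{rface}}$, it is \emph{strong}; hence it neither gives nor receives under $R_{\ref{rface}}$ and it is never a recipient under $R_{\ref{r11}}$ or $R_{\ref{r11+}}$. Charge that merely transits through $v$ does not change its net charge, so the only charge $v$ can lose is what it sends out under $R_{\ref{r11}}$ or $R_{\ref{r11+}}$, and these two rules are mutually exclusive for $v$. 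This gives two cases.

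If all $11$ neighbors of $v$ are weak, then $R_{\ref{r11}}$ applies and $v$ sends $\tfrac{5}{11}$ to each of its $11$ neighbors, a total of exactly $5$, so the final charge of $v$ is $0$.

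Otherwise $v$ has a strong neighbor and $R_{\ref{r11+}}$ governs the outgoing charge; here the crude estimate used for $12^+$-vertices fails, since it only yields $5-11\cdot\tfrac12=-\tfrac12$. Instead I would consider the cyclic sequence of the $11$ neighbors of $v$ in the fixed embedding, label each one $W$ (weak) or $S$ (strong), and let $k\ge 1$ be the number of $S$'s. By $R_{\ref{r11+}}$, a weak neighbor $w$ receives a total of $1$ from $v$ exactly when the two vertices next to $w$ around $v$ are both strong, i.e. when $w$ is a maximal run of $W$'s all by itself, and otherwise receives exactly $\tfrac12$. Writing $a$ for the number of such isolated $W$'s, the total charge leaving $v$ equals $a+\tfrac{(11-k)-a}{2}=\tfrac{11-k+a}{2}$.

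The crux is the inequality $a\le k-1$. The maximal runs of $W$'s and the maximal runs of $S$'s alternate around the cycle and so are equal in number, and since there are $k$ strong vertices there are at most $k$ maximal $S$-runs, hence at most $k$ maximal $W$-runs; in particular $a\le k$. If $a=k$, then every maximal $W$-run is a singleton and there are exactly $k$ of them, so $v$ has exactly $k$ weak neighbors, i.e. $11-k=k$, which is impossible because $11$ is odd. Hence $a\le k-1$, the total charge leaving $v$ is at most $\tfrac{11-k+(k-1)}{2}=5$, and the final charge of $v$ is at least $0$. The only genuine obstacle is pinning down this parity argument; the rest is bookkeeping against the definitions of the rules.
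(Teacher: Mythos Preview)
Your proof is correct and follows essentially the same approach as the paper: both handle the $R_{\ref{r11}}$ case identically and then, under $R_{\ref{r11+}}$, exploit the oddness of $11$ on the cyclic strong/weak pattern around $v$ to squeeze out the missing $\tfrac12$. Your formulation via the count $a$ of isolated $W$'s and the inequality $a\le k-1$ is a cleaner packaging of the same parity argument that the paper carries out as a short case analysis on whether a strong neighbor is next to a weak vertex receiving only $\tfrac12$.
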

\begin{proof}
    Let $v$ be a vertex of degree $11$ which then has an initial charge of $5$.
    Note that only $R_{\ref{r11}}$ and $R_{\ref{r11+}}$ may apply. If no neighbor of $v$ is strong then $R_{\ref{r11}}$ applies and the final charge is $5-11\cdot \frac{5}{11}=0$. Therefore, we may assume that $v$ has at least one strong neighbor, and that $R_{\ref{r11+}}$ applies.
    
    For each neighbor of $v$, we count the amount of charge leaving $v$ towards it. Note that this amount is at most $\frac12$. 
    
    Let $X$ (resp. $Y$) be the set of weak vertices that receive $1$ (resp. $\frac12$) from $v$. Note that vertices of $X$ are next to only strong vertices around $v$. Assume that a strong neighbor $w$ of $v$ is next to a vertex from $Y$, then we have two cases: 
    \begin{itemize}
        \item Either all the neighbors of $v$ except $w$ lie in $Y$, in which case no charge transits through $w$, and the final charge of $v$ is at least $5-10\cdot\frac12=0$.
        \item Otherwise, there is another strong neighbor $w'$ of $v$ such that all the neighbors of $v$ between $w$ and $w'$ lie in $Y$. In that case, at most $\frac14$ transits through $w$ and $w'$, hence the final charge of $v$ is at least $5-2\cdot\frac 14-9\cdot \frac12=0$.
    \end{itemize}
    
    Therefore, strong neighbors of $v$ are close to only vertices of $S\cup X$. Since $v$ has odd degree and at least a strong neighbor, there must be two strong vertices $v_1,v_2$ next to each other around $v$ (recall that vertices of $X$ cannot be next to each other). Now the charge leaving $v$ towards $v_1,v_2$ is at most $\frac14$, hence the final charge of $v$ is again at least $5-2\cdot \frac 14-9\cdot\frac 12=0$, which concludes. 
\end{proof}

Since vertices of degree $6$ have an initial non-negative charge and do not give nor receive any charge, it simply remains to show that $5^-$-vertices end up with non-negative charges. To this end, we consider two cases depending on the value of $\Delta$.

\subsection{$\Delta \ge 11$}

\begin{lemma}
    Vertices of degree $3$ have a positive charge after applying the discharging rules.
\end{lemma}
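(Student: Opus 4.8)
The vertex $v$ starts with charge $\omega_0(v)=\deg(v)-6=-3$. Only faces (via $R_{\ref{rface}}$) and $7^+$-vertices (via $R_{\ref{r7}},R_{\ref{r11}},R_{\ref{r11+}}$) ever give charge, and a weak vertex is never used as a transit vertex, so $v$ neither gives charge away nor loses any in transit; the plan is therefore to show that $v$ receives at least $3$. By Lemma~\ref{lem:deg3forbif4} every neighbour of $v$ has degree at least $6$, and one checks using Lemma~\ref{lem:cutedgeforbif} and $\delta(G)\ge 3$ that the three faces incident to $v$ are pairwise distinct. Write $N(v)=\{v_1,v_2,v_3\}$ in cyclic order, let $f_{ij}$ denote the face between the edges $vv_i$ and $vv_j$, and recall that $v$ receives $\deg(f)-3\ge 1$ from each incident $4^+$-face $f$ through $R_{\ref{rface}}$, since $v$ is a $5^-$-vertex. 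I would then split on the number $k\in\{0,1,2,3\}$ of $3$-faces among $f_{12},f_{23},f_{31}$.

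If $k=0$, the three incident faces are $4^+$-faces and already give $v$ at least $3$. If $k=1$, say $f_{12}$ is the $3$-face (so $v_1v_2\in E(G)$) while $f_{23},f_{31}$ are $4^+$-faces; if either has degree at least $5$ the two of them give $v$ at least $1+2=3$, so I may assume both are $4$-faces, giving $v$ exactly $2$, and I need one more unit from $N(v)$. Here I would apply Theorem~\ref{thm:edge} to $f_{23}$ and to $f_{31}$: each contains the $(<\Delta)$-vertex $v$, hence its fourth vertex (the one non-adjacent to $v$ on the face) has degree $\Delta\ge 11$, and at most one vertex of $\{v_2,v_3\}$, respectively of $\{v_1,v_3\}$, has degree $<\Delta$. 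Therefore either $v_3$ has degree $\Delta$, or both $v_1$ and $v_2$ have degree $\Delta$. In the first case the two neighbours of $v_3$ that flank $v$ in the rotation around $v_3$ are exactly those two fourth vertices, both strong, so $R_{\ref{r11+}}$ moves the full $1$ from $v_3$ to $v$ (and $R_{\ref{r11}}$ cannot apply to $v_3$, which has a strong neighbour). In the second case $v_1$ and $v_2$ are mutually adjacent $11^+$-vertices, each therefore with a strong neighbour, so each sends $v$ at least $\frac12$ by $R_{\ref{r11+}}$, totalling at least $1$. Either way $v$ ends with charge at least $-3+2+1=0$.

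If $k\in\{2,3\}$, then $v$ lies on at least two $3$-faces, so Lemma~\ref{lem:deg3forbif} (with $\Delta\ge 11$, so $\min(10,\Delta)=10$) forces every neighbour of $v$ to have degree at least $11$; in particular each $v_i$ is strong and, since it has a strong vertex among its neighbours, the transfer from $v_i$ to $v$ is governed by $R_{\ref{r11+}}$. I would argue that each $v_i$ sends $v$ the full $1$: on each side of $v$ in the rotation around $v_i$, the flanking vertex is either some $v_j$ (degree at least $11$, strong) when that side is a $3$-face, or, when that side is the unique incident $4^+$-face $f$ (the case $k=2$), the neighbour of $v_i$ on $f$ distinct from $v$ — which by Theorem~\ref{thm:edge} has degree $\Delta$, being a vertex of $f$ other than $v$ and other than the at most one further $(<\Delta)$-vertex of $f$, which must be consecutive to $v$ on $f$, i.e.\ $v_1$ or $v_3$. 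So both flanking vertices are strong and each $v_i$ sends $1$, whence $v$ receives $3$ from its neighbours. For $k=3$ the faces contribute $0$, so $v$ ends with charge $-3+3=0$; for $k=2$ the remaining $4^+$-face contributes at least one more unit, so $v$ ends strictly positive.

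The genuinely delicate case is $k=1$ with two incident $4$-faces: a priori $v$ could be adjacent to three degree-$6$ vertices, which give it nothing, leaving it at $-3+2=-1$. What rescues the argument is Theorem~\ref{thm:edge}, which simultaneously forces some neighbour of $v$ to have degree $\Delta\ge 11$ and guarantees that the vertices through which $R_{\ref{r11+}}$ would route the extra $\frac14+\frac14$ are exactly the (high-degree, hence strong) fourth vertices of those $4$-faces. The effort is thus entirely in the embedding bookkeeping — correctly identifying, from the incident faces, the two vertices flanking $v$ in the rotation around each $v_i$, and verifying that every relevant transfer is governed by $R_{\ref{r11+}}$ rather than $R_{\ref{r11}}$ — after which the arithmetic is immediate.
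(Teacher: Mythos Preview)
Your proof is correct and follows essentially the same approach as the paper: split on the number of incident $3$-faces, use Lemma~\ref{lem:deg3forbif} to force $11^+$-neighbours when $k\ge 2$, and use Theorem~\ref{thm:edge} to locate a $\Delta$-degree neighbour (with strong flanking vertices) in the remaining case. The paper streamlines your $k=1$ analysis by observing once, via Theorem~\ref{thm:edge} and Lemma~\ref{lem:deg3forbif4}, that \emph{every} vertex sharing a face with $v$ is strong, so for any neighbour $u$ the two vertices next to $v$ around $u$ are automatically strong; this makes your sub-split on $5^+$-faces and the $v_3$-versus-$\{v_1,v_2\}$ dichotomy unnecessary.
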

\begin{proof}
    
    Let $v$ be a vertex of degree $3$ with initial charge $-3$. We consider several cases depending on how many $4^+$ faces contain $v$. In each case we show that $v$ receives at least 3, so that its final charge is non-negative. 
    
    First observe that by Theorem~\ref{thm:edge} and Lemma~\ref{lem:deg3forbif4}, all the vertices on the same face as $v$ are strong. Indeed, since weak vertices have degree at most $5$, $v$ is not adjacent to a weak vertex and $v$ is not on the same face that a non-adjacent vertex of degree less than $\Delta$. In particular, for every neighbor $u$ of $v$, $v$ is next to two strong vertices around $u$.
    
    If $v$ is adjacent to three $4^+$-faces, then $v$ receives at least $3$ from these faces by $R_{\ref{rface}}$.

     If $v$ is adjacent to two $4^+$-faces, then $v$ receives at least $2$ from these faces by $R_{\ref{rface}}$. Consider $f$ one of these faces. By Theorem \ref{thm:edge}, one of the neighbors $u$ of $v$ on $f$ has degree $\Delta$. Since $v$ is close to two strong vertices around $u$, it receives an additional $1$ from $u$ by $R_{\ref{r11+}}$, for a total of $3$.

     Otherwise, $v$ is adjacent to at most one $4^+$-face, so $v$ is adjacent to at least two $3$-faces. By Lemma~\ref{lem:deg3forbif}, every neighbor of $v$ has degree at least $11$.
    So the vertex $v$ receives a charge of $3$ from its neighbors by $R_{\ref{r11+}}$, since, for every neighbor $u$ of $v$, $v$ is next to two strong vertices around $u$.
\end{proof}

\begin{lemma}
    Vertices of degree $4$ have a positive charge after applying the discharging rules.
\end{lemma}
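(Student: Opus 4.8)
The plan is to handle a degree-$4$ vertex $v$ (initial charge $-2$) by a case analysis on the number of $4^+$-faces incident to $v$, mirroring the structure of the degree-$3$ proof but now only needing to recover a charge of $2$. As in the degree-$3$ case, the first step is to extract structural restrictions on the neighborhood of $v$ from the forbidden configurations: by Theorem~\ref{thm:edge}, every $4^+$-face incident to $v$ has all of its vertices strong except possibly one neighbor of $v$ (the two low-degree vertices on such a face must be consecutive), and by Lemma~\ref{lem:deg4forbif3}, if $v$ is triangulated then it cannot simultaneously have a triangulated $5$-neighbor and a neighbor of degree $<12$. I would also record that whenever a neighbor $u$ of $v$ has two strong vertices next to it around $u$, the rules $R_{\ref{r11+}}$ and $R_{\ref{r7}}$ let $v$ collect the ``boosted'' amount from $u$.

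Next I would run the cases. If $v$ is incident to at least two $4^+$-faces, then $R_{\ref{rface}}$ already gives $v$ at least $1+1=2$ when both faces are $4$-faces, and more if any is larger; I need to check the boundary case where $v$ has exactly two incident $4$-faces and one $3$-face, which is exactly the configuration forbidden by Lemma~\ref{lem:deg3forbif2} for $\Delta\le 10$ — but here we are in the regime $\Delta\ge 11$, so I instead argue that one of the faces forces a $\Delta$-neighbor (Theorem~\ref{thm:edge}) and pull the extra half-unit through $R_{\ref{r11+}}$ if the two required $4$-faces are not enough on their own; most likely the two $4$-faces already suffice. If $v$ is incident to exactly one $4^+$-face $f$, then $v$ gets at least $1$ from $f$ via $R_{\ref{rface}}$, and $f$ contains a $\Delta$-neighbor $u$ of $v$ by Theorem~\ref{thm:edge}; since $v$ is next to two strong vertices around $u$ (its two neighbors on $f$ can be taken strong, again by Theorem~\ref{thm:edge}), $u$ sends $v$ a full $1$ by $R_{\ref{r11+}}$ (as $\Delta\ge 11$), giving $2$ in total. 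Finally, if $v$ is triangulated, then by Lemma~\ref{lem:deg4forbif3} either $v$ has no triangulated $5$-neighbor, or every neighbor of $v$ has degree $\ge 12$; in the latter subcase each neighbor sends at least $\frac12$, and since two consecutive neighbors around $v$ are mutually ``next to'' each other one gets the transiting quarters, yielding well over $2$. The former subcase is the delicate one: each neighbor of $v$ is either a $6^+$-vertex sending charge by $R_{\ref{r7}}$/$R_{\ref{r11}}$/$R_{\ref{r11+}}$, or a weak vertex sending nothing, and I must argue using the absence of a triangulated $5$-neighbor together with Lemmas~\ref{lem:deg3forbif4}, \ref{lem:deg4forbif3} that enough of the neighbors are high-degree, and that $v$ sits ``next to two strong vertices'' around each high-degree neighbor, to accumulate $2$.

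The main obstacle will be this last subcase — a triangulated $4$-vertex with a weak (degree $4$ or $5$) neighbor. There $R_{\ref{rface}}$ contributes nothing, so the entire charge of $2$ must come from neighbors via $R_{\ref{r7}}$--$R_{\ref{r11+}}$, and a weak neighbor contributes $0$; I expect the argument needs that at most one neighbor of $v$ can be weak (otherwise some earlier forbidden configuration or Lemma~\ref{lem:deg3forbif4}/\ref{lem:deg4forbif3} is violated, or the weak neighbor itself is a triangulated $5$-vertex which is excluded), so that $v$ has three $6^+$-neighbors, and that the transiting-charge mechanism around each of them (two strong vertices next to $v$, since the other neighbors on the incident triangles are strong) delivers the boosted amounts; a short worst-case computation with $\Delta=11$ and three $11^+$-neighbors each sending $1$, or with three degree-$7$ neighbors each sending $\tfrac{2\cdot 1}{7}$, should clear the threshold of $2$, possibly requiring one more small forbidden configuration to rule out the tightest pattern. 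I would keep an eye on whether the bound $2\Delta+7$ and the value $\min(10,\Delta)$ in Lemma~\ref{lem:deg3forbif} leave exactly enough slack, and if not, isolate and prove the missing reducible configuration before finishing the discharging.
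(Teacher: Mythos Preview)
Your case split by the number of $4^+$-faces is the right skeleton, and the paper proceeds the same way. But there is a genuine gap: you never invoke Lemma~\ref{lem:minconstraint}, and it is the key engine in both nontrivial cases.

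In the one-square case, your claim that the $\Delta$-neighbor $u$ sends a full $1$ to $v$ is not justified. For $R_{\ref{r11+}}$ to yield $1$, both vertices next to $v$ \emph{around $u$} must be strong. One of them is $x$, the far corner of the square (strong by Theorem~\ref{thm:edge}), but the other lies on the triangle sharing the edge $uv$ and may very well be weak; Theorem~\ref{thm:edge} says nothing about it. So $u$ is only guaranteed to send $\tfrac12$, and you still need another $\tfrac12$. The paper closes this by applying Lemma~\ref{lem:minconstraint}: removing $v$ keeps all pairs of its neighbors at distance $\le 2$, so the degree sum of the other three neighbors is at least $\Delta+14\ge 25$, forcing a $9^+$-neighbor next to a $7^+$-neighbor, which supplies the missing $\tfrac12$ via $R_{\ref{r7}}$ or $R_{\ref{r11+}}$.

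In the triangulated case your dichotomy from Lemma~\ref{lem:deg4forbif3} is too coarse. ``No triangulated $5$-neighbor'' does not prevent, say, several degree-$6$ neighbors (strong but sending nothing) or a single weak neighbor alongside three degree-$7$ neighbors (your own computation $3\cdot\tfrac{2}{7}<2$ shows this fails). No extra forbidden configuration is needed here; what you are missing is again Lemma~\ref{lem:minconstraint}, which gives $\sum_i\deg(v_i)\ge 2\Delta+15\ge 37$. The paper then runs a sub-case analysis on how many neighbors have degree $\ge 12$ (using Lemma~\ref{lem:deg4forbif3} only in the specific spots where a triangulated $5$-neighbor would otherwise appear) and checks in each branch that the charge reaches $2$.
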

        \begin{figure}[hbtp]
        \begin{center}
        \tikzstyle{vertex}=[circle,draw, minimum size=15pt, scale=1, inner sep=1pt]
        \tikzstyle{give}=[circle,draw, minimum size=15pt, scale=1, inner sep=1pt, fill=black!5, very thick]
        \tikzstyle{receive}=[circle,draw, minimum size=15pt, scale=1, inner sep=1pt, fill=black!5]
        \tikzstyle{fleche}=[->,>=latex]
        \begin{tikzpicture}[scale=.5]

            \node (a0) at (0,0) [receive] {$v$};
            \node (b0) at (0,2) [vertex]{$a$};
            \node (c0) at (0,-2) [vertex]{$c$};
            \node (d0) at (2,0) [vertex]{$b$};
            \node (e0) at (-2,0) [vertex]{$d$};
            \node at (0,-3.5) {Case 2};
            \draw (b0) to (d0);
            \draw (c0) to (d0);
            \draw (b0) to (e0);
            \draw (c0) to (e0);
            \draw (a0) to (b0);
            \draw (a0) to (c0);
            \draw (a0) to (d0);
            \draw (a0) to (e0);

            \tikzset{xshift=-14cm}
            \node (a1) at (6,0) [receive] {$v$};
            \node (b1) at (6,2) [vertex]{$a$};
            \node (c1) at (6,-2) [vertex]{$c$};
            \node (d1) at (8,0) [vertex]{$b$};
            \node (e1) at (4,0) [vertex]{$d$};
            \node (f1) at (8,2) [vertex]{$x$};
            \node at (6,-3.5) {Case 1};
            \draw (c1) to (d1);
            \draw (b1) to (e1);
            \draw (c1) to (e1);
            \draw (a1) to (b1);
            \draw (a1) to (c1);
            \draw (a1) to (d1);
            \draw (a1) to (e1);
            \draw (b1) to (f1);
            \draw (d1) to (f1);
      
        \end{tikzpicture}
        \end{center}
        \caption{Configurations for the discharging of a vertex $v$ of degree $4$.}
        \label{fig:charge4}
    \end{figure}
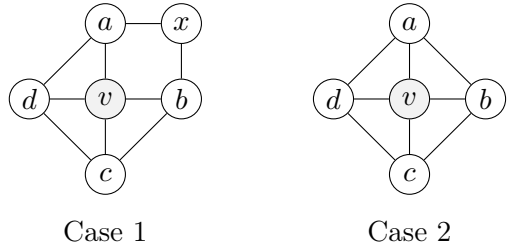
     \begin{proof}
     Let $v$ be a vertex of degree $4$. The initial charge of $v$ is $-2$. If $v$ is incident to a $5^+$-face or incident to two squares then $v$ receives at least $2$ from these faces by $R_{\ref{rface}}$. The two remaining cases are when $v$ is adjacent to four triangles or to a square and three triangles.
    \medskip

    \noindent   
\textbf{Case 1.} Assume that $v$ is incident to a square. Denote by $a, b, c, d$ the neighbors of $v$ and by $x$ the last vertex of the square as shown in Figure \ref{fig:charge4}. First note that $\deg(a)= \Delta$ or $\deg(b)= \Delta$ by Theorem \ref{thm:edge} and, similarly, $\deg(x)= \Delta$. By symmetry assume $\deg(a)= \Delta$ so $v$ receives $1$ from the square by $R_{\ref{rface}}$ and at least $\frac{1}{2}$ from $a$ by $R_{\ref{r11+}}$. 

By Lemma \ref{lem:minconstraint}, we get that $\deg(b)+\deg(c)+\deg(d)\geq \Delta+14 \geq 25$. In particular, there exists $y\in\{b,c,d\}$ with degree at least 9, and another neighbor must have degree at least 7. Therefore, $y$ is next to at least one strong vertex around $v$, so $R_{\ref{r11+}}$ or $R_{\ref{r7}}$ applies and $y$ gives either $\frac12$ or $\frac{3\omega_0(y)}{2\deg(y)}\geqslant \frac12$ depending on which rule is applied. Therefore $v$ receives at least $1+\frac12+\frac12=2$.
\medskip
    
\noindent
\textbf{Case 2.} Assume that $v$ is adjacent to four triangles. Denote by $a, b, c, d$ the neighbors of $v$ in the cyclic ordering as in Figure \ref{fig:charge4}. The inequality $\deg(a)+\deg(b)+\deg(c)+\deg(d)\geq 2 \Delta+15 \ (\star)$ holds by Lemma~\ref{lem:minconstraint}. 
    \begin{itemize}
        \item  If at least three of the four neighbors have degree at least $12$ (say, $b,c,d$), then $c$ gives $1$ to $v$ and both $b$ and $d$ give $\frac{1}{2}$ to $v$ by $R_{\ref{r11+}}$.
        \item  If $v$ has exactly two neighbors with degree at least $12$, the other two neighbors $Y$ have degree at most $11$ and the sum of their degree is at least $15$. Thus, one of the following happens:
        \begin{itemize}
            \item A vertex of $Y$ has degree $11$, $v$ has three neighbors of degree at least $11$, and $v$ receives at least $1+2\cdot\frac{1}{2}$ by $R_{\ref{r11+}}$. 
            \item The two vertices of $Y$ have degree at least $5$, and they are not weak by Lemma~\ref{lem:deg4forbif3}. Thus, $v$ receives $2\cdot 1$ from its two neighbors of degree at least $12$ by $R_{\ref{r11+}}$.
        \end{itemize}
\item  If $v$ has exactly one neighbor with degree at least $12$ (say $a$), then by $(\star)$ we have $\deg(b)+\deg(c)+\deg(d)\geq \Delta+15 \geq 27$ (note that $\Delta \geq 12$ because $\deg(a)=12$). Thus, since $b,c,d$ all have degree at most $11$, they also have degree at least $5$, hence one of the following happens:
        \begin{itemize}
            \item A vertex in $\{b,c,d\}$ has degree $5$, then the two others have degree $11$ and then $v$ receives $1+2\cdot\frac12$ by $R_{\ref{r11+}}$.
            \item Otherwise, none of $b,c,d$ are weak and at least two of them have degree at least $8$. In that case, $v$ receives at least $2\cdot \frac{1}{2}$ from them by $R_{\ref{r7}}$, and 1 from $a$ by $R_{\ref{r11+}}$.
        \end{itemize}
        \item If $v$ has no $12^+$-neighbor then $\deg(a)+\deg(b)+\deg(c)+\deg(d)\geq 2 \Delta+15 \geq 37$. 
        \begin{itemize}
            \item If $v$ has a $5^-$-neighbor (say $a$), then the three other neighbors are strong. By $(\star)$, if $\deg(a)=4$ then $b,c,d$ have degree 11 and they give $2$ to $v$ by  $R_{\ref{r11+}}$. Otherwise, $\deg(a)=5$ and by Lemma~\ref{lem:deg4forbif3}, $a$ is strong. Moreover, two of the three remaining vertices have degree $11$ so they give $2\cdot 1$ to $v$ by $R_{\ref{r11+}}$.
            
\item If the four neighbors have degree at least $6$, all of them are strong. And, because of $(\star)$, either $v$ has two $11$-neighbors giving it $2\cdot 1$ by $R_{\ref{r11+}}$, or it has three $9^+$-neighbors giving it $3\cdot\frac23$, or it has four neighbors of degree at least 8 giving it $4\cdot\frac 12$ by $R_{\ref{r7}}$.
        \end{itemize}
    \end{itemize}
\end{proof}

\begin{lemma}
    Vertices of degree $5$ have a positive charge after applying the discharging rules.
\end{lemma}
        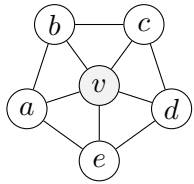
\begin{figure}[hbtp]
        \begin{center}
        \tikzstyle{vertex}=[circle,draw, minimum size=15pt, scale=1, inner sep=1pt]
        \tikzstyle{give}=[circle,draw, minimum size=15pt, scale=1, inner sep=1pt, fill=black!5, very thick]
        \tikzstyle{receive}=[circle,draw, minimum size=15pt, scale=1, inner sep=1pt, fill=black!5]
        \tikzstyle{fleche}=[->,>=latex]
        \begin{tikzpicture}[scale=1]
            
            \node (a) at (198:1) [vertex] {$a$};
            \node (b) at (126:1) [vertex] {$b$};
            \node (c) at (54:1) [vertex] {$c$};
            \node (d) at (342:1) [vertex] {$d$};
            \node (e) at (270:1) [vertex] {$e$};
            \node (f) at (0,0) [receive] {$v$};
            
            \draw (a) to (f);
            \draw (b) to (f);
            \draw (c) to (f);
            \draw (d) to (f);
            \draw (e) to (f);
            \draw (a) to (b);
            \draw (b) to (c);
            \draw (c) to (d);
            \draw (d) to (e);
            \draw (e) to (a);
        \end{tikzpicture}
        \end{center}
        \caption{Configuration around the vertex $v$ for the discharging of a $5$-vertex}
        \label{fig:charge5}
    \end{figure}
         
\begin{proof}
    Let $v$ be a vertex of degree $5$, its initial charge is $-1$. If $v$ is adjacent to a $4^+$-face then $v$ receives at least $1$ from this face by  $R_{\ref{rface}}$. The remaining case is when $v$ is adjacent to five triangles. We denote by $a,b,c,d,e$ its neighbors as in Figure \ref{fig:charge5}. By Lemma \ref{lem:minconstraint}, $\deg(a)+\deg(b)+\deg(c)+\deg(d)+\deg (e)\geq 2 \Delta+17$ $(\star\star)$. If two neighbors of $v$ have degree at least $12$ then $v$ receives at least $\frac{1}{2}$ from each by $R_{\ref{r11+}}$. 
    \medskip
    
    \noindent
    \textbf{Case 1.} $v$ has a $12^+$-neighbor, say $a$. \\ Then $(\star\star)$ yields $\deg(b)+\deg(c)+\deg(d)+\deg(e)\geq  \Delta+17 \geq 29$ so there are at most two $5^-$-vertices in $\{b,c,d,e\}$.
    \begin{itemize}
        \item  If $v$ has two $5^-$-neighbors, then its two other neighbors either have degree 8 and 11 or both have degree at least $9$. In each case, $v$ receives at least $\min(\frac14+\frac{5}{11},2\cdot \frac{1}{3})\geq \frac12$ from them by $R_{\ref{r11}}$ and $R_{\ref{r7}}$, plus at least $\frac12$ from $a$.
        \item  If $v$ has exactly one $5^-$-neighbor $w$ then, if $w$ is not adjacent to $a$, $a$ gives $1$ to $v$ by $R_{\ref{r11+}}$. Assume now that $w$ is adjacent to $a$, say $w=b$. And $v$ receives $\frac{1}{2}$ from $a$ by $R_{\ref{r11+}}$.
        By $(\star\star)$, we get $\deg(c)+\deg(d)+\deg(e)\geq 24$. Since all these vertices are strong and consecutive on $f$, they all give at least $\frac 32 \cdot \frac{\omega_0}{11}$ to $v$. Since $\omega_0(c)+\omega_0(d)+\omega_0(e)=\deg(c)+\deg(d)+\deg(e)-18\geqslant 6$, $v$ receives at least $\frac32 \cdot \frac{6}{11} > \frac 12$ from $c,d,e$. So in total $v$ receives a charge larger than one.
\item If $v$ has no $5^-$-neighbor, then each of its neighbors is strong and $v$ receives $1$ from $a$ by $R_{\ref{r11+}}$.
    \end{itemize}
   \medskip
   
   \noindent
    \textbf{Case 2.} $v$ has no $12^+$-neighbor. \\
    Recall that by $(\star\star)$, $\deg(a)+\deg(b)+\deg(c)+\deg(d)+\deg (e)\geq 39$ so $v$ has at most two $5^-$-neighbors. 
    \begin{itemize}
        \item If $v$ has two $5^-$-neighbors, then by $(\star\star)$, the three others all have degree at least $7$. One of the following happens: 
        \begin{itemize}
            \item Three vertices have degree at least $8$, then two of them are next to each other around $v$, so $v$ receives at least $2 \cdot \frac{3}{8}+\frac 14$ from them by $R_{\ref{r7}}$.
            \item Two vertices have degree $11$ and the last one has degree 7. Thus, $v$ receives at least $2 \cdot \frac{5}{11} + \frac{1}{7} > 1$ by $R_{\ref{r11}}$, $R_{\ref{r11+}}$ and $R_{\ref{r7}}$. 
        \end{itemize}
        \item If $v$ has at most one $5^-$-neighbor, then all the vertices have at least one strong neighbor on the cycle. Thus $v$ receives charge at least $\frac32 \sum_{w \in N(v)} \frac{\omega_0(w)}{\deg(w)} \ge \frac 32\cdot\frac{39-30}{11} >1$.
\end{itemize}
     In each case, $v$ gets at least $1$ charge, which concludes the proof. 
\end{proof}

\subsection{$\Delta \in \{ 9 ,10 \}$}

\begin{lemma}
    Vertices of degree $3$ have non-negative charge after applying the discharging rules.
\end{lemma}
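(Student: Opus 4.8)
The setup mirrors the $\Delta \ge 11$ case. Let $v$ be a $3$-vertex with initial charge $-3$; I must show $v$ receives at least $3$. As in the earlier lemma, I first observe that by Theorem~\ref{thm:edge} and Lemma~\ref{lem:deg3forbif4}, every vertex sharing a face with $v$ is strong (weak vertices have degree $\le 5$, so $v$ has no weak neighbor, and $v$ cannot share a face with a non-adjacent vertex of degree $<\Delta$). Hence for each neighbor $u$ of $v$, $v$ sits between two strong vertices around $u$, which means $v$ is eligible to receive the "doubled" transit amount from $u$ via whichever rule governs $u$. The key difference from the previous case is that now $\Delta \le 10$, so no neighbor of $v$ can have degree $\ge 11$: every neighbor of $v$ has degree between $7$ and $10$ (it is strong, hence $6^+$; and if a neighbor had degree exactly $6$ it gives nothing, which I must handle — but actually Lemma~\ref{lem:deg3forbif} forces neighbors to have degree $\ge 11$ whenever $v$ is incident to two $3$-faces, which is impossible here, so $v$ is incident to at most one $3$-face). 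Thus I split on the number of $4^+$-faces incident to $v$, exactly as before.

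If $v$ is incident to three $4^+$-faces, it receives at least $3$ from faces by $R_{\ref{rface}}$ and we are done. If $v$ is incident to exactly two $4^+$-faces, it gets $\ge 2$ from faces; picking one such face $f$, Theorem~\ref{thm:edge} gives a neighbor $u$ of $v$ on $f$ with $\deg(u) = \Delta$. Since $\Delta \in \{9,10\}$, rule $R_{\ref{r7}}$ applies at $u$, and because $v$ is next to two strong vertices around $u$, the charge reaching $v$ from $u$ is $2\cdot\frac{\omega_0(u)}{2\deg(u)} = \frac{\omega_0(u)}{\deg(u)} = \frac{\Delta-6}{\Delta}$, which is $\frac13$ for $\Delta=9$ and $\frac25$ for $\Delta=10$ — this is \emph{not} enough to reach $3$, so this case needs more care: I will need the third neighbor, or a sharper structural fact, to contribute. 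Here I expect to invoke Lemma~\ref{lem:deg3forbif2} (forbidding a $3$-vertex on one $3$-face and two $4$-faces when $\Delta \le 10$), which rules out exactly the bad subcase and forces $v$ to be incident to a $5^+$-face or to three $4^+$-faces, pushing the face contribution up to $\ge 3$.

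The genuinely delicate case is when $v$ is incident to at most one $4^+$-face, i.e.\ to at least two $3$-faces: Lemma~\ref{lem:deg3forbif} then forces every neighbor of $v$ to have degree $\ge 11$, contradicting $\Delta \le 10$. So this case is vacuous, and in fact Lemma~\ref{lem:deg3forbif2} kills the "one $3$-face, two $4$-faces" configuration as well, leaving only "zero $3$-faces" (three $4^+$-faces, handled above) and "one $3$-face, at least one $5^+$-face among the other two" — in the latter the two non-triangular faces contribute at least $(5-3) + (4-3) = 3$, or even a single $5^+$-face plus counting more carefully. Assembling these, every surviving case gives $v$ a charge of at least $3$. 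The main obstacle is the bookkeeping in the "exactly one $3$-face" situation: I must check that Lemmas~\ref{lem:deg3forbif} and~\ref{lem:deg3forbif2} together eliminate every face-pattern for which the face charges alone fall short of $3$, since — unlike the $\Delta \ge 11$ regime — neighbors of degree $\le 10$ simply cannot supply a full unit of charge to make up a deficit.
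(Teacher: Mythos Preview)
Your proposal eventually arrives at exactly the paper's argument: Lemma~\ref{lem:deg3forbif} rules out two incident $3$-faces (since every vertex has degree at most $\Delta\le\min(10,\Delta)$), Lemma~\ref{lem:deg3forbif2} rules out the pattern of one $3$-face with two $4$-faces, and the surviving face patterns give $v$ at least $3$ via $R_{\ref{rface}}$ alone. The opening paragraph about strong vertices and the detour through $R_{\ref{r7}}$ are unnecessary (and your transfer formula there is off by a factor of two: with two strong vertices next to $v$ around $u$, rule $R_{\ref{r7}}$ delivers $\tfrac{2\omega_0(u)}{\deg(u)}$, not $\tfrac{\omega_0(u)}{\deg(u)}$), but since you correctly abandon that route the final argument is sound and matches the paper.
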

\begin{proof}
    Let $v$ be a vertex of degree $3$ with initial charge $-3$.
    Note that $v$ does not give charge.
    The vertex $v$ cannot be adjacent to more than one $3$-face by Lemma~\ref{lem:deg3forbif}.
    
    If $v$ is adjacent to one $3$-face, the other faces incident to $v$ cannot be two $4$-faces by Lemma~\ref{lem:deg3forbif2}.
    Hence $v$ is incident to a $4^+$-face and a $5^+$-face.
    By $R_{\ref{rface}}$, $v$ receives at least $3$ from these faces.
    Otherwise, $v$ is adjacent to three $4^+$-faces and $v$ receives at least $3$ from these faces by $R_{\ref{rface}}$. 
    In both cases, $v$ has a non-negative final charge.
\end{proof}

\begin{lemma}
    Vertices of degree $4$ have non-negative charge after applying the discharging rules.
\end{lemma}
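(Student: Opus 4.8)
The initial charge of a $4$-vertex $v$ is $-2$, so we must show $v$ gains at least $2$. The key feature of the regime $\Delta\in\{9,10\}$ is that $G$ has no $11^+$-vertex, so the only rules that can send charge to $v$ are $R_{\ref{rface}}$ and $R_{\ref{r7}}$. If $v$ is incident to a $5^+$-face or to two $4$-faces, then $R_{\ref{rface}}$ already contributes at least $2$; so we may assume $v$ is incident to four $3$-faces, or to exactly one $4$-face and three $3$-faces. Let $a,b,c,d$ be the neighbours of $v$ in cyclic order. As $v$ is a $5^-$-vertex, Lemma~\ref{lem:deg3forbif4} forbids a degree-$3$ neighbour, so $\deg a,\deg b,\deg c,\deg d\ge 4$; and since all pairs of neighbours of $v$ are at distance at most $2$ in $G-v$, Lemma~\ref{lem:minconstraint} gives $\deg a+\deg b+\deg c+\deg d\ge 2\Delta+15$ if $v$ is triangulated, and $\ge 2\Delta+14$ otherwise.

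The estimate that drives both cases is this: if $u$ is a neighbour of $v$ with $7\le \deg u\le 10$ and both vertices next to $v$ around $u$ are strong, then $R_{\ref{r7}}$ sends $v$ a charge $\frac{\deg u-6}{\deg u}+2\cdot\frac{\deg u-6}{2\deg u}=\frac{2(\deg u-6)}{\deg u}$, which is at least $\frac27$ in general and at least $\frac23$ once $\deg u\ge 9$. \emph{Triangulated case.} Here $v$ is a triangulated $4$-vertex, so Lemma~\ref{lem:deg4forbif3} (with $\Delta<12$) forbids a triangulated $5$-neighbour; hence every degree-$5$ neighbour of $v$ is incident to a $4^+$-face, so receives at least $1$ by $R_{\ref{rface}}$ and is strong. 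The only possible weak neighbour thus has degree $4$, but a degree-$4$ neighbour would force the other three neighbours to have degree sum at least $2\Delta+11>3\Delta$, impossible since $\Delta\le 10$. So all four neighbours of $v$ are strong, and the bound $2\Delta+15$ with each degree at most $\Delta$ forces at least three neighbours of degree $\ge 7$; around any such neighbour $v$ lies between two of the other (strong) neighbours, so the estimate applies to each. Checking the few extremal degree patterns (the tightest being three neighbours of degree $9$ when $\Delta=9$, which yields exactly $3\cdot\frac23=2$; for $\Delta=10$ one finds $10,10,9$ and $6$, or $10,10,8,7$, etc.) shows that the degree-$\ge 7$ neighbours send $v$ at least $2$ in total.

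\emph{Square case.} Let $x$ be the fourth vertex of the $4$-face and $a,b$ the neighbours of $v$ on it, so the triangles are $vbc,vcd,vda$. Theorem~\ref{thm:edge} applied to the $4$-face forces $\deg x=\Delta$ and $\deg a=\Delta$ or $\deg b=\Delta$; by the $(a,c)\leftrightarrow (b,d)$ symmetry assume $\deg a=\Delta$. Then $R_{\ref{rface}}$ gives $v$ a charge $1$ from the $4$-face, and as $x$ is strong and one of the two vertices next to $v$ around $a$, rule $R_{\ref{r7}}$ sends $v$ at least $\frac{\Delta-6}{\Delta}+\frac{\Delta-6}{2\Delta}=\frac{3(\Delta-6)}{2\Delta}\ge\frac12$ from $a$. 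It remains to collect $\frac12$ more (when $\Delta=9$) or $\frac25$ more (when $\Delta=10$) from $b,c,d$, whose degree sum is at least $\Delta+14$. Both $a$ and $x$ are strong of degree $\Delta$; $b$ is incident to the $4$-face so is strong unless $\deg b=4$, and $\deg b=4$ forces $\deg c=\deg d=\Delta$; any weak vertex among $\{c,d\}$ has degree $\le 5$, there is at most one such, and its presence forces the two remaining vertices of $\{b,c,d\}$ to have degree $\ge 9$ (hence strong). In each resulting pattern the charge $v$ collects from the strong neighbours of degree $\ge 7$ among $\{b,c,d\}$ — including their transit terms through the strong vertices $x$ and $a$ — is at least the required amount, so $v$ ends with charge $\ge 2$.

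The main obstacle is the square case: $v$ is no longer triangulated, so Lemma~\ref{lem:deg4forbif3} gives no control on its $5$-neighbours, and one must argue purely from the degree-sum inequality that a weak neighbour pins the remaining degrees high enough that the guaranteed transfers from $R_{\ref{r7}}$ — base plus transit through $x$ and through $a$ — still reach $2$. The only delicate point is the bookkeeping of which vertex is next to $v$ around each neighbour, so that the $\frac{\deg u-6}{2\deg u}$ transit term really fires through $x$ or $a$; this is routine once the finitely many extremal degree patterns are listed.
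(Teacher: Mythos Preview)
Your argument is correct and follows essentially the same route as the paper: the degree-sum inequality from Lemma~\ref{lem:minconstraint}, then case analysis on the faces around $v$ using $R_{\ref{rface}}$ and $R_{\ref{r7}}$. The one structural difference is in the triangulated case. You invoke Lemma~\ref{lem:deg4forbif3} to rule out a triangulated $5$-neighbour (so every neighbour of $v$ is strong) and then appeal to a finite check of extremal degree patterns. The paper instead splits on whether $v$ has a $10$-neighbour: if not, every neighbour has degree in $\{6,\ldots,9\}$ and the clean summation bound $\sum_{u\in N(v)}\tfrac{2\omega_0(u)}{\deg u}\ge \tfrac{2}{9}\sum_{u}(\deg u-6)\ge \tfrac{2}{9}(33-24)=2$ finishes immediately; if so, $\Delta=10$ and only two explicit patterns remain. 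Both approaches are valid; the paper's summation trick simply replaces your deferred pattern-check with a one-line inequality. In the square case your analysis matches the paper's, with the same role for $x$ and the same use of Theorem~\ref{thm:edge}; the bookkeeping you call ``routine'' does indeed go through.
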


\begin{proof}
    Let $v$ be a vertex of degree $4$ with initial charge $-2$. If $v$ is incident to a $5^+$-face or to two squares then $v$ receives at least $2$ from these faces by $R_{\ref{rface}}$. The two remaining cases are when $v$ is adjacent to four triangles or $v$ is adjacent to a square and three triangles.
\medskip

    \noindent
\textbf{Case 1.} Assume that $v$ is incident to a square. Denote by $a, b, c, d$ the neighbors of $v$ and by $x$ the last vertex of the square as shown in Figure \ref{fig:charge4}. First note that $\deg(a)= \Delta$ or $\deg(b)= \Delta$ by Theorem \ref{thm:edge} and, similarly, $\deg(x)= \Delta$. By symmetry assume $\deg(a)= \Delta$ so $v$ receives $1$ from the square by $R_{\ref{rface}}$. We show that $v$ receives 1 from $a,b,c,d$.

By Lemma~\ref{lem:minconstraint}, we get that $\deg(b)+\deg(c)+\deg(d)\geq \Delta+14 \geq 23$. Therefore, either $v$ has two $8^+$-neighbors among $\{b,c,d\}$ and it receives $\frac12+2\cdot \frac{1}{4}=2$, or $b,c,d$ have degree $9,7,7$ and $v$ receives $\frac{1}{3} + \frac{1}{3} + \frac{2}{7} + \frac{2}{7} \ge 1$ by $R_{\ref{r11+}}$ and $R_{\ref{r7}}$.
\medskip

\noindent
\textbf{Case 2.} Assume that $v$ is adjacent to four triangles. Denote by $a, b,c, d$ the neighbors of $v$ as shown in Figure \ref{fig:charge4}.
By Lemma~\ref{lem:minconstraint}, we get $\deg(a)+\deg(b)+\deg(c)+\deg(d)\geq 2 \Delta+15 \ge 33 \ (\star)$ hence $v$ has at most one $7^-$-neighbor. 

If $v$ has no $10$-neighbor, then all its neighbors have degree at least $6$ so they are all strong. So all the vertices neighbors of $v$ give charge at least $\frac{2 \omega_0}{\deg}$ to $v$. So in total $v$ receives a charge of $2 \cdot \frac{33-24}{9} = 2$.

Otherwise, $v$ has a 10-neighbor, say $a$ (and then $\Delta=10$). Since $\deg(b)+\deg(c)+\deg(d)\geq 25$ (since $\Delta=10$), one of the following happens:
            \begin{itemize}
            \item  One neighbor among $\{b,c,d\}$ has degree $5$ and the two others have degree $10$ so $v$ receives charge at least $\frac{4}{5} + 2\cdot\frac{3}{5} = 2$ from $a,b,c,d$ by $R_{\ref{r7}}$.
            \item  $b,c,d$ are $6^+$-vertices and then $v$ only have strong neighbors. So by $R_{\ref{r7}}$, $v$ receives charge at least $2\cdot \frac{35-24}{10} >2$ from $a,b,c,d$.
        \end{itemize}
    \medskip
    
    In all cases, $v$ receives at least $2$ and has a non-negative final charge, which concludes.
\end{proof}

\begin{lemma}
    Vertices of degree $5$ have a positive charge after applying the discharging rules.
\end{lemma}

\begin{proof}
    Let $v$ be a vertex of degree $5$, its initial charge is $-1$. If $v$ is adjacent to a face of degree at least $4$ then $v$ receives at least $1$ charge by $R_{\ref{rface}}$. The remaining case is when $v$ is adjacent to five triangles. We denote by $a, b,c,d,e$ its neighbors as in Figure \ref{fig:charge5}. By Lemma \ref{lem:minconstraint}, $\deg(a)+\deg(b)+\deg(c)+\deg(d)+\deg (e)\geq 2 \Delta+17$ $(\star)$ so $v$ has at most two $5^-$-neighbors.
    \begin{itemize}
        \item If $v$ has exactly two $5^-$-neighbors, then its three other neighbors have a total degree of at least $2 \Delta +7$. Note that two of these neighbors are next to each other around $v$. If $v$ has a $7$-neighbor then the two others have degree $\Delta$ they all give at least $\frac{3}{14}+\frac{1}{2}+\frac{1}{3}>1$ to $v$ by $R_{\ref{r7}}$. Otherwise, $v$ has three $8^+$-neighbors, in which case $v$ similarly receives at least $2\cdot\frac 38 + \frac 14 = 1$.
        \item If $v$ has exactly one $5^-$-neighbor (say $a$), then $b,c,d,e$ are strong. By $(\star)$, either all of them have degree at least $7$ and $v$ receives at least $2\cdot\frac{2}{7} + 2\cdot\frac{3}{14} = 1$, or three of them have degree at least 8 and $v$ receives at least $\frac{1}{2} + 2\cdot\frac{3}{8} = 1$, or two of them have degree at least $9$ and $v$ receives at least $2\cdot\frac{1}{2} = 1$ by $R_{\ref{r7}}$.
\item If $v$ has no $5^-$-neighbor, then all its neighbors are strong. So each neighbor $u$ of $v$ gives $\frac{2 \omega_0(u)}{\deg(u)}$ to $v$ and then $v$ receives charge at least $2 \cdot \frac{35-30}{10}=1$.
\end{itemize}
        In all cases, $v$ receives at least $1$ hence ends up with non-negative final charge.
\end{proof}

\section{Extension to Theorem~\ref{thm:delta6}}

We present the proof of Theorem~\ref{thm:delta6}. We re-use the same analysis as before: the discharging rules are the same, but the configurations are more general. We start with a minimum counterexample to Theorem~\ref{thm:delta6}, i.e. a planar graph $G$ with $\Delta(G)\leqslant 6$ and $\chi_2(G)>21$, which is minimal for $\leq$. We first present an adapted version of Lemma~\ref{lem:minconstraint}, whose proof is similar. 

\begin{lemma}
\label{lem:aux}
    The graph $G$ does not contain a vertex $v$ such that all the pairs of neighbors of $v$ are at distance at most $2$ in $G-v$, and $v$ has at least $21$ neighbors at distance $2$ in $G$.
\end{lemma}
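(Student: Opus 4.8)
The plan is to follow the proof of Lemma~\ref{lem:minconstraint} essentially line by line, the only substantive change being that the palette of size $2\Delta+7$ is replaced by the fixed palette of $21$ colors available in the $\Delta\le 6$ regime (recall that the target bound here is $\chi_2(G)\le 21$). As in Section~4, I would argue by contradiction, assume $G$ contains a vertex $v$ with the stated property, exhibit a smaller planar graph, lift its coloring, and extend it to $v$, thereby contradicting minimality.

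First I would isolate why the hypothesis that all pairs of neighbors of $v$ are at distance at most $2$ in $G-v$ is exactly the right one to delete $v$ without losing information. Take any two vertices $x,y\in V(G)\setminus\{v\}$ that are at distance at most $2$ in $G$. Either a shortest such path avoids $v$, in which case $x,y$ remain at distance at most $2$ in $G-v$; or the only such path passes through $v$, in which case $x$ and $y$ are both neighbors of $v$ and the hypothesis guarantees that they are still at distance at most $2$ in $G-v$. Hence $G-v$ realizes exactly the same distance-two adjacencies on $V(G)\setminus\{v\}$ as $G$, so every distance-two coloring of $G-v$ is already a valid partial coloring of $G$. Since no edge is added, $\Delta(G-v)\le\Delta(G)$ is immediate and $G-v<G$, so minimality applies to $G-v$. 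Next, I would take a distance-two $21$-coloring $\alpha$ of $G-v$ furnished by minimality; by the previous point it is a partial coloring of $G$, so the only task is to color $v$. The colors forbidden for $v$ are precisely those carried by its distance-two neighbors, so their number is at most the number of distance-two neighbors of $v$; the extension step is then identical to that of Lemma~\ref{lem:minconstraint}. As soon as this count is below the palette size $21$, some color remains free for $v$, and assigning it extends $\alpha$ to a distance-two $21$-coloring of $G$, a contradiction. This threshold $21$ is the $\Delta\le 6$ counterpart of the threshold $2\Delta+7$ of Lemma~\ref{lem:minconstraint}, and it is exactly through the count of distance-two neighbors of $v$ that the hypothesis enters.

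I do not expect a genuine obstacle, since the argument is a transcription of Lemma~\ref{lem:minconstraint}; the only care needed is bookkeeping. The hard part, such as it is, will be to invoke the pairs hypothesis at precisely the through-$v$ constraints so that $\alpha$ is certified to be a legal partial coloring of $G$, and to check that the forbidden-color count coincides with the distance-two-neighbor count with neither omission nor double counting. Everything else is a verbatim repetition of the reducibility template of Section~4, which is why the paper notes that the proof is similar; the subsequent discharging analysis for $\Delta\le 6$ will then use this lemma in its contrapositive form, to lower-bound the degree sums around low-degree vertices, exactly as Lemma~\ref{lem:minconstraint} is used for $\Delta\ge 9$.
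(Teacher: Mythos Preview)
Your approach is exactly the one the paper intends: it explicitly says the proof is ``similar'' to that of Lemma~\ref{lem:minconstraint}, and your write-up spells out precisely that reducibility argument (delete $v$, invoke minimality on $G-v$, use the pairs hypothesis to certify the coloring as a partial coloring of $G$, then extend to $v$) with the palette size $21$ in place of $2\Delta+7$. Note only that the printed statement has ``at least $21$'' where it should read ``less than $21$'' (compare the wording of Lemma~\ref{lem:minconstraint}); your proof correctly addresses the intended statement.
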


As a consequence, we can easily show that $G$ has no weak vertex. 

\begin{corollary}\label{lem:delta6}
    The graph $G$ does not contain a triangulated $5$-vertex, or a $4$-vertex incident to three triangles.
\end{corollary}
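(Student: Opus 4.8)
The strategy is to apply Lemma~\ref{lem:aux} directly to the two configurations. In both cases I want to exhibit a vertex $v$ all of whose pairs of neighbors are already at distance at most $2$ in $G-v$ (so that $v$ is the unique vertex needing a color after recoloring $G-v$), and then bound the number of vertices at distance $2$ from $v$ in $G$ by at most $20$, contradicting Lemma~\ref{lem:aux}. Since $\Delta\le 6$, the second-neighborhood of a $k$-vertex $v$ has size at most $k + k(\Delta-1) = k(\Delta-1)+k$, and incidences to $3$-faces reduce this count because consecutive neighbors of $v$ on a triangle are mutually adjacent (hence one is in the other's neighborhood and they don't both contribute fresh distance-$2$ vertices); this is exactly the kind of saving used throughout Section~4.

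For the triangulated $5$-vertex: let $v$ have neighbors $v_1,\dots,v_5$ cyclically, with all five faces $v_iv_{i+1}v$ being triangles. Then each $v_i$ is adjacent to $v_{i-1}$ and $v_{i+1}$, so all pairs of neighbors of $v$ are at distance $\le 2$ in $G-v$ (indeed, via $v$ or directly). The vertices at distance exactly $2$ from $v$ are among $\bigcup_i N(v_i)\setminus(\{v\}\cup N(v))$; each $v_i$ has degree $\le 6$, contributes $v$ and its two cycle-neighbors among $N(v)\cup\{v\}$, leaving at most $3$ genuinely new vertices per $v_i$, so at most $5 + 5\cdot 3 = 20$ vertices at distance $\le 2$ — and I will check this is in fact $\le 20$, or tighten by one using a shared-vertex or degree argument if the naive bound is exactly $21$ (e.g., $\deg(v_i)=6$ for all $i$ forces a $K_{2,3}$-type overlap among the outer neighborhoods, or the face structure forces some $v_i$ to have smaller degree). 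For the $4$-vertex $v$ incident to three triangles: three of the four consecutive pairs of neighbors lie on a triangle and are adjacent, so again all pairs of neighbors are at distance $\le 2$ in $G-v$; counting, the three "triangle" neighbors each lose two slots and the fourth loses one (its two triangle-side neighbors on $N(v)$), giving at most $4 + (2\cdot 3 + 1\cdot\text{something})$ new vertices, comfortably below $21$.

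The main obstacle I anticipate is the triangulated $5$-vertex case, where the crude count $5 + 15 = 20$ sits right at the threshold and any double-counting slack must be argued carefully: I need to confirm that Lemma~\ref{lem:aux} forbids $\ge 21$ (strict) so that exactly $20$ is fine, and otherwise extract one extra coincidence from planarity — for instance, two of the outer neighborhoods $N(v_i)$, $N(v_j)$ must share a vertex, or some $v_i$ is itself adjacent to some $v_j$ with $j\neq i\pm1$ creating a chord that either gives an edge-separator (contradicting Lemma~\ref{lem:cutedgeforbif}) or reduces the count. Once the counting is pinned down, both contradictions with Lemma~\ref{lem:aux} are immediate, and the corollary follows since a counterexample with no triangulated $5$-vertex and no $4$-vertex on three triangles has no weak vertex at all (every $5^-$-vertex is then incident to a $4^+$-face and picks up enough charge from $R_{\ref{rface}}$).
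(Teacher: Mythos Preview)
Your strategy is the intended one (the paper gives no explicit proof and treats the corollary as immediate from Lemma~\ref{lem:aux}), and the triangulated $5$-vertex case is handled correctly: the five neighbors form a $5$-cycle in $G-v$, so every pair is at distance at most $2$ there, and the count $5+5\cdot 3=20$ is already strictly below $21$. No extra planarity saving is needed, and your worry about the threshold is unfounded (the ``at least $21$'' in the statement of Lemma~\ref{lem:aux} is a typo for ``fewer than $21$'', as the analogy with Lemma~\ref{lem:minconstraint} and the usage in Lemma~\ref{lem:delta6b} make clear).

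The $4$-vertex case, however, has a genuine gap. With exactly three incident triangles, say on the consecutive pairs $v_1v_2,\,v_2v_3,\,v_3v_4$, the neighbors of $v$ induce only the path $v_1v_2v_3v_4$ in $G-v$, so $v_1$ and $v_4$ may well be at distance $3$ there. Your assertion that ``all pairs of neighbors are at distance $\le 2$ in $G-v$'' is therefore false for this pair, and Lemma~\ref{lem:aux} does not apply to $G-v$ directly: a distance-$2$ coloring of $G-v$ could give $v_1$ and $v_4$ the same color, which is not a valid partial coloring of $G$. The standard fix, exactly as in Lemmas~\ref{lem:deg2forbif} and~\ref{lem:delta6b}, is to take $G'=G-v+v_1v_4$ instead. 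The new edge can be drawn through the former position of $v$, and since each of $v_1,v_4$ loses the neighbor $v$ before gaining the other as a new neighbor, $\Delta(G')\le 6$ is preserved. In $G'$ every pair among $v_1,\dots,v_4$ is at distance at most $2$, and the number of forbidden colors for $v$ in $G$ is at most $4+4+3+3+4=18<21$, so the extension goes through.
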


We also need the following configuration.
\begin{lemma}\label{lem:delta6b}
     The graph $G$ does not contain a $3$-vertex adjacent to a $3$-face.
\end{lemma}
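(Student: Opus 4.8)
The statement to prove is Lemma~\ref{lem:delta6b}: in the minimal counterexample $G$ to Theorem~\ref{thm:delta6} (so $\Delta(G)\le 6$, $\chi_2(G)>21$), there is no $3$-vertex $v$ adjacent to a $3$-face. The natural approach is the same reduction scheme used throughout: suppose such a $v$ exists, delete it (and possibly add an edge) to obtain a smaller graph $G'$, invoke minimality to get a distance-$2$ $21$-coloring $\alpha$ of $G'$, and then show that $\alpha$ extends to $v$ because the number of colors forbidden for $v$ is at most $20$. The key point is to count, using the structure forced by the $3$-face, that $v$ sees fewer than $21$ vertices at distance $\le 2$ in the appropriate auxiliary graph.

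First I would set up the configuration: let $N(v)=\{v_1,v_2,v_3\}$ with $v_1v_2$ an edge bounding the $3$-face incident to $v$. The vertices at distance $2$ from $v$ are contained in $\big(N(v_1)\cup N(v_2)\cup N(v_3)\big)\setminus\{v\}$. A crude bound gives $\deg(v_1)+\deg(v_2)+\deg(v_3)-3 \le 3\cdot 6 - 3 = 15$ candidate vertices, but we must be careful: we also need all pairs of neighbors of $v$ to be at distance $\le 2$ in $G-v$ so that Lemma~\ref{lem:aux} (or equivalently the deletion argument) actually yields a partial coloring of $G$ after extending. Since $v_1$ and $v_2$ are adjacent, the pair $(v_1,v_2)$ is fine; for the pairs $(v_1,v_3)$ and $(v_2,v_3)$, either they are already at distance $\le 2$ in $G-v$, or we add the edges $v_1v_3$ and/or $v_2v_3$ to form $G'$. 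Adding such an edge raises $\deg(v_3)$ to at most $\deg(v_3)+2 \le \Delta$ only if $\deg(v_3)\le 4$; here is where one has to be slightly careful, but since $v$ is a $3$-vertex and (by Corollary~\ref{lem:delta6} and the fact that $G$ has no weak vertex) its neighbors are constrained, one can argue that the relevant degrees are small enough, or alternatively note that $v_1v_3\in E(G)$ or a common neighbor exists because of the planar embedding near the $3$-face. Actually the cleanest route: take $G'=G-v$ when the neighbors are already pairwise at distance $\le 2$, and otherwise identify $v_3$ with $v_1$ (as in Lemma~\ref{lem:deg3forbif4}), i.e. $G'=G-v+v_1v_3'$-type contraction, keeping $\Delta(G')\le \Delta(G)$ since the merged vertex has degree $\le \deg(v_1)+\deg(v_3)-2 \le 6$ when at least one of them has degree $\le 4$.

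Then I would do the color count. With $G'=G-v$ and $\alpha$ a $21$-coloring, the colors forbidden for $v$ are those of its at-most-$15$ vertices at distance $2$, i.e.\ at most $15 < 21$, so $v$ extends — contradiction. The subtlety is only in guaranteeing that $\alpha$ restricted to $N(v)$ uses distinct colors on the pairs that need it, which is exactly handled by either the distance-$2$ adjacency already present or by the edge-addition/identification. Since $15<21$ with plenty of room, even the identification case ($G'$ merges $v_1,v_3$, forbidding at most $15-1+|\text{new}|$ colors, still well under $21$) goes through.

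**Main obstacle.** The only real delicacy is the bookkeeping that ensures the reduced graph $G'$ is a legitimate smaller instance: that $G'$ is planar, that $\Delta(G')\le\Delta(G)=6$ (which forces us to identify $v_3$ with a low-degree neighbor rather than just add edges when both $v_1,v_3$ have degree $6$), and that all distance-$\le 2$ relations of $G$ among $V(G)\setminus\{v\}$ survive in $G'$. Once that is set up correctly, the numeric inequality $15<21$ is trivially satisfied, so this lemma is considerably easier than the $\Delta\ge 9$ configurations — the work is entirely in the case analysis on how $v_3$ attaches relative to the $3$-face, and I expect it to parallel the proofs of Lemmas~\ref{lem:deg3forbif4} and~\ref{lem:deg3forbif} almost verbatim.
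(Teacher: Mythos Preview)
Your plan has the right shape, but the reduction step contains a genuine gap. You worry that adding both $v_1v_3$ and $v_2v_3$ may push $\deg(v_3)$ above $\Delta$, and then fall back on identifying $v_1$ with $v_3$; however, that identification yields a vertex of degree up to $\deg(v_1)+\deg(v_3)-2$, which exceeds $6$ whenever both have degree at least $5$---a case your sketch explicitly leaves unresolved. The paper's fix is much simpler and avoids all of this: a \emph{single} added edge suffices. With $x,y$ the two neighbors on the $3$-face and $z$ the third neighbor, set $G'=(G-v)+xz$ (if $xz\notin E(G)$, else just $G'=G-v$). Then $(x,z)$ are adjacent in $G'$, and $(y,z)$ are at distance $2$ via $y\!-\!x\!-\!z$ since $xy\in E(G)$ from the triangle; the pair $(x,y)$ was already adjacent. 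Moreover, your degree bookkeeping forgets that deleting $v$ removes one incident edge at each neighbor: after adding $xz$ we get $\deg_{G'}(x)=\deg_G(x)$ and $\deg_{G'}(z)=\deg_G(z)$, so $\Delta(G')\le\Delta(G)$ automatically, with no constraint on the original degrees and no contraction needed.

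Once the reduction is set up this way, the count of colors forbidden for $v$ is at most $3+(\deg x-2)+(\deg y-2)+(\deg z-1)\le 16<21$, which is the paper's bound. Your $15$ slightly undercounts because $v_3$ need not lie in $N(v_1)\cup N(v_2)\cup N(v_3)$ (the graph is simple, and $v_3$ may be non-adjacent to $v_1,v_2$). Either way the inequality is slack, and extending $\alpha$ to $v$ gives the contradiction.
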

\begin{proof}
     If $G$ contains a $3$-vertex $v$ adjacent to a $3$-face, let $x$ and $y$ be the two other vertices of the $3$-face and $z$ the last vertex adjacent to $v$. Let $G'$ be the graph obtained from $G-v$ by arbitrarily adding one of the edges $xz$ or $yz$ if they are not already present in $G$. 
     
     By minimality, $G'$ admits a distance-$2$ $(2 \Delta +7)$-coloring $\alpha$. The coloring $\alpha$ is a partial coloring of $G$ and forbids at most $16$ colors for $v$, a contradiction with Lemma~\ref{lem:aux}.
\end{proof}

We now reach a contradiction applying the rules $R_{\ref{rface}}$ to $R_{\ref{r7}}$, which concludes the proof of Theorem~\ref{thm:delta6}. 
\begin{lemma}
    All faces and vertices have non-negative charge after applying of discharging rules.
\end{lemma}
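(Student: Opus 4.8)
The plan is to mimic the structure of the $\Delta \ge 11$ analysis, checking each vertex and face type in turn, but now with $\Delta \le 6$ and the threshold $21 = 2\Delta + 9$ when $\Delta = 6$ (so there is a bit of slack compared to $2\Delta+7$). First I would dispatch the faces exactly as in Lemma~\ref{claim:faces}: a $3$-face is inert, and a $4^+$-face $f$ gives $d-3$ to each incident $5^-$-vertex but by Theorem~\ref{thm:edge} has at most two such vertices, so it gives away at most $2(d-3) = 2d-6 = \omega_0(f)$. Next, vertices of degree exactly $6$ are inert with charge $0$. For vertices of degree $7$ through $12^+$ there is nothing to check: since $\Delta \le 6$ there are none.

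The heart of the argument is then the $5^-$-vertices, and here the key simplification is Corollary~\ref{lem:delta6}: $G$ has no weak vertex, meaning every $5$-vertex is incident to a $4^+$-face and every $4$-vertex is incident to at least two $3$-faces' complement, i.e. cannot be incident to three triangles. So a $5$-vertex $v$ (charge $-1$) is always incident to some $4^+$-face and receives at least $d-3 \ge 1$ from it by $R_{\ref{rface}}$, ending at $\ge 0$. A $4$-vertex $v$ (charge $-2$) is incident to at most two $3$-faces, hence incident either to a $5^+$-face (receiving $\ge 2$) or to two $4$-faces (receiving $1+1 = 2$), again ending at $\ge 0$. The only genuinely delicate case is the $3$-vertex $v$ (charge $-3$): by Lemma~\ref{lem:delta6b} it is incident to no $3$-face at all, so it is incident to three $4^+$-faces and receives at least $3 \cdot 1 = 3$ by $R_{\ref{rface}}$, finishing at $\ge 0$. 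One should double-check that $R_{\ref{r11}}$, $R_{\ref{r11+}}$ and $R_{\ref{r7}}$ never fire at all here (no vertex has degree $\ge 7$), so the only active rule is $R_{\ref{rface}}$, which makes the whole computation collapse to the face analysis plus the three small cases above.

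Concretely I would write: let $f$ be a face; if $\deg(f)=3$ its charge stays $2\cdot 3 - 6 = 0$; otherwise by Theorem~\ref{thm:edge} at most two incident vertices are $5^-$-vertices, so $f$ loses at most $2(\deg(f)-3)=\omega_0(f)$ and ends non-negative. Let $v$ be a vertex; if $\deg(v)=6$, nothing happens and $\omega(v)=0$; if $\deg(v)=5$, Corollary~\ref{lem:delta6} forbids $v$ from being triangulated, so $v$ has an incident $4^+$-face and gains $\ge 1 = -\omega_0(v)$ by $R_{\ref{rface}}$; if $\deg(v)=4$, Corollary~\ref{lem:delta6} forbids three incident triangles, so $v$ has two incident $4^+$-faces (or one $5^+$-face), gaining $\ge 2$ by $R_{\ref{rface}}$; if $\deg(v)=3$, Lemma~\ref{lem:delta6b} forbids any incident triangle, so $v$ has three incident $4^+$-faces and gains $\ge 3$ by $R_{\ref{rface}}$. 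In every case the final charge is non-negative, contradicting $(\star)$ and completing the proof of Theorem~\ref{thm:delta6}.

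I expect the main (minor) obstacle to be bookkeeping the ``at most two incident $3$-faces / three triangles'' deductions cleanly from Corollary~\ref{lem:delta6} and Lemma~\ref{lem:delta6b} — in particular making sure that ``not triangulated'' for a degree-$d$ vertex with all $3^+$-faces really does force an incident $4^+$-face of the right multiplicity, and that a $4$-vertex incident to exactly two $3$-faces indeed sees its remaining incidences distributed as two $4$-faces or one $5^+$-face (a parity/count argument on the $d$ face-incidences around $v$). These are routine, and the slack of $21$ versus $2\Delta+7 = 19$ is comfortably absorbed since the face rule alone already balances everything.
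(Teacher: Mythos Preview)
Your proposal is correct and follows essentially the same approach as the paper: observe that only $R_{\ref{rface}}$ fires since $\Delta\le 6$, handle faces via Theorem~\ref{thm:edge} exactly as in Lemma~\ref{claim:faces}, and use Corollary~\ref{lem:delta6} and Lemma~\ref{lem:delta6b} to show a $d$-vertex with $d\in\{3,4,5\}$ sees at least $6-d$ incident $4^+$-faces. The only omission is that you do not explicitly dispatch vertices of degree $1$ and $2$ via Lemmas~\ref{lem:deg1forbif} and~\ref{lem:deg2forbif}, which the paper does; add that one line and your write-up matches the paper's proof.
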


\begin{proof}
    First note that $\Delta\leq 6$ imply that only $R_{\ref{rface}}$ can happen. 
    \begin{itemize}
        \item The same proof as Claim~\ref{claim:faces} ensures that faces end up with non-negative final charge. 
    
    \item $G$ does not contain vertices of degree $1$ or $2$ by Lemmas~\ref{lem:deg1forbif} and~\ref{lem:deg2forbif}. 
    \item By Lemmas~\ref{lem:delta6} and~\ref{lem:delta6b}, each vertex $v$ of degree $d\in\{3,4,5\}$ is incident to at least $6-d$ $4^+$-faces. Thus $v$ receives at least 1 from each of them, hence its final charge is at least $d-6+6-d=0$.
    \item Vertices of degree $6$ are not affected by the rule, hence keep their initial zero charge.
    \end{itemize}
    In each case, faces and vertices of $G$ end up with non-negative charge.
\end{proof}

\bibliographystyle{abbrv}

\end{document}